\newtheorem{lemma}{Lemma}[section]
\newtheorem{corollary}[lemma]{Corollary}
\newtheorem{theorem}[lemma]{Theorem}
\newtheorem{proposition}[lemma]{Proposition}
\newtheorem{remark}[lemma]{Remark}
\newtheorem{definition}[lemma]{Definition}
\newcommand{\supp}{\operatorname{supp}}
\def\a{\alpha}
\def\b{\beta}
\def\Z{\mathbb Z}
\def\N{\mathbb N}
\def\geo{G_E^{(0)}}
\def\te{\mathcal{T}_E}
\def\tec{\mathcal{T}_{E}^c}
\def\tecm{\mathcal{T}_{E}^{cm}}
\def\oe{\mathcal{O}_E}
\def\zz{\mathcal Z}
\newcommand{\stcenter}{{\zz(A_R(G_E))}}
\definecolor{turquoise2}{rgb}{0,0.898039,0.933333}
\definecolor{magenta}{rgb}{1,0,1}
\begin{document}

\subjclass[2010]{Primary 16U70; Secondary  22A22} \keywords{Leavitt path algebra, Steinberg algebra, center, compact set}

\author[Lisa O. Clark]{Lisa Orloff Clark}
\address{Lisa O. Clark: Department of Mathematics and Statistics, University of Otago, PO Box 56, Dunedin 9054, New Zealand.}
\email{lclark@maths.otago.ac.nz}

\author[Dolores Mart\'{\i}n]{Dolores Mart\'{\i}n Barquero}
\address{Dolores Mart\'{\i}n Barquero: Departamento de Matem\'atica Aplicada, Escuela T\'ecnica Superior de Ingenieros Industriales, Universidad de M\'alaga. 29071 M\'alaga. Spain.}
\email{dmartin@uma.es}

\author[C\'andido Mart\'{\i}n]{C\'andido Mart\'{\i}n Gonz\'alez}
\address{C\'andido Mart\'{\i}n Gonz\'alez:  Departamento de \'Algebra Geometr\'{\i}a y Topolog\'{\i}a, Fa\-cultad de Ciencias, Universidad de M\'alaga, Campus de Teatinos s/n. 29071 M\'alaga. Spain.}
\email{candido@apncs.cie.uma.es}

\author[Mercedes Siles ]{Mercedes Siles Molina}
\address{Mercedes Siles Molina: Departamento de \'Algebra Geometr\'{\i}a y Topolog\'{\i}a, Fa\-cultad de Ciencias, Universidad de M\'alaga, Campus de Teatinos s/n. 29071 M\'alaga.   Spain.}
\email{msilesm@uma.es}

\thanks{
The first author is supported by the Marsden grant 15-UOO-071 from the Royal Society of New Zealand and a University of Otago Research Grant.
The last three authors are supported by the Junta de Andaluc\'{\i}a and Fondos FEDER, jointly, through projects  FQM-336 and FQM-7156. 
They are also supported by the Spanish Ministerio de Econom\'ia y Competitividad and Fondos FEDER, jointly, through project  MTM2013-41208-P.
\newline
This research took place while the first author was visiting the Universidad de M\'alaga. She thanks her coauthors for their hospitality.}

\title[Using the Steinberg algebra model to determine the center of any LPA]{Using the Steinberg algebra model to determine the center of any Leavitt path algebra}
\begin{abstract} {
Given an arbitrary graph, we describe the center of its Leavitt path algebra over a commutative unital ring. 
Our proof uses the Steinberg algebra model of the Leavitt path algebra. 
A key ingredient is a characterization of compact open 
invariant subsets of the unit space of the graph groupoid in terms of the 
underlying graph: an open invariant subset is compact if and only 
if its associated hereditary and saturated set of vertices satisfies Condition (F).
We also give a basis of the center. Its cardinality depends on the number of minimal compact open invariant subsets
of the unit space.}
\end{abstract}

\maketitle

\section{Introduction and preliminary results}

In this paper we use the Steinberg algebra model to determine the center of { a } Leavitt path algebra associated to an 
arbitrary graph over a commutative unital ring {$R$}.

The study of the center of certain graph algebras over a field has been 
delivered in a series of 
papers: in \cite{AC} the authors determine the center of a simple Leavitt path algebra; 
in \cite{CMMSS1}, the centers of path algebras,
prime Cohn path algebras and prime Leavitt path algebras are studied; while in \cite{CMMSS2} 
the center of a Leavitt path algebra associated to a row-finite graph is considered.
More generally, in the setting of Leavitt path algebras over an arbitrary commutative ring,
the center of a simple Leavitt path algebra 
is given in \cite{BaH}.

The class of Steinberg algebras was introduced in \cite{Steinberg} to describe discrete inverse semigroup algebras. Each Steinberg algebra is built 
from an ample groupoid  and consists of locally constant, compactly supported functions from the groupoid to the 
coefficient ring. An elegant description of the center of an arbitrary Steinberg algebra is 
given in {\cite[Proposition~4.13]{Steinberg}. Concretely, it consists of the set of \emph{class functions}.}

Steinberg algebras can {also be used to model
Leavitt path algebras, as was first shown in  \cite{CFST} and then generalized in \cite{CS}. To an arbitrary graph $E$, we associate an
ample groupoid $G_E$ as in \cite{Pat2002}.  Then the Steinberg $R$-algebra $A_R(G_E)$ is isomorphic to the Leavitt path algebra $L_R(E)$.}
We use the characterization of the center of the Steinberg algebra  to describe the center of any Leavitt path algebra 
$L_R(E)$ in terms of the underlying graph.

We have divided the paper into three sections. In the first section, we give the preliminaries needed to 
understand the rest of the paper. In the second section,  we characterise 
compactness of open invariant subsets of $\geo$ in terms of the underlying graph.  
To do this, we use the lattice isomorphism established in \cite[Theorem~3.3]{CMMS} between 
open invariant subsets of $\geo$ and pairs $(H,S)$ where $H$ is a hereditary and saturated set 
of vertices and $S$ is a set of breaking vertices associated to $H$.  
We show in Proposition \ref{prop:compacidad}
that an open invariant subset $U_{H, S}$ is compact 
if and only if $H$ satisfies Condition (F), first established in this article.
Roughly speaking, Condition (F) says that $H$ has a finite number of vertices, 
that there is a finite number of paths arriving for the first time 
at $H$ and  not passing through a breaking vertex, and that there are finitely many paths arriving at a 
breaking vertex of $H$.   We are interested in compact open invariant subsets  of $\geo$ 
because they are precisely the supports of the class functions in the 
zero component  of the center.

In Section \ref{TheCenter} we prove the main result of the paper, Theorem \ref{thm:center}, which describes the homogeneous components
of the center of the Leavitt path algebra $L_R(E)$ in terms of the graph. 

The $n$-component of the center of a Leavitt path algebra will be nonzero, for some $n\neq 0$, precisely 
when there exists a cycle $c$ without exits such that { the open invariant subset of $\geo$ that corresponds to 
the saturated hereditary closure of the set of vertices in $c$} is compact.
Finally, Theorem \ref{thm:base} gives a basis for the center of any Leavitt path algebra $L_R(E)$.

\medskip

\noindent \textbf{Directed Graphs.}   A \emph{directed graph} is a 4-tuple $E=(E^0, E^1, r_E, s_E)$ 
consisting of two disjoint sets $E^0$, $E^1$ and two maps
$r_E, s_E: E^1 \to E^0$. The elements of $E^0$ are the \emph{vertices} and the elements of 
$E^1$ are the edges of $E$.  Further, for $e\in E^1$, $r_E(e)$ and $s_E(e)$ are 
called the \emph{range} and the \emph{source} of $e$, respectively. 
If there is no confusion with respect to the graph we are considering, we simply write $r(e)$ and $s(e)$.

A vertex {$v$ such that $s^{-1}(v) = \emptyset$ is called a \emph{sink};
$v$ is called an \emph{infinite emitter} if $s^{-1}(v)$ is an infinite set.  Otherwise, a vertex
 that is neither a sink nor an infinite emitter is called a \emph{regular vertex}.}  
The  set of infinite emitters will be denoted by ${\rm Inf}(E)$ while ${\rm Reg}(E)$ will denote the set of regular vertices.

In order to define the Leavitt path algebra, we need to introduce the  
{\it extended graph of} $E$.  This is the graph 
$\widehat{E}=(E^0,E^1\cup (E^1)^*, r_{\widehat{E}}, s_{\widehat{E}}),$ where
$(E^1)^*=\{e_i^* \ | \ e_i\in  E^1\}$ and the functions $r_{\widehat{E}}$ and $s_{\widehat{E}}$ are defined as 
\[{r_{\widehat{E}}}_{|_{E^1}}=r,\ {s_{\widehat{E}}}_{|_{E^1}}=s,\
r_{\widehat{E}}(e_i^*)=s(e_i), \hbox{ and }  s_{\widehat{E}}(e_i^*)=r(e_i).\]
The elements of $E^1$ { are } called \emph{real edges}, while {for each  $e\in E^1$ we} call $e^\ast$ a
\emph{ghost edge}.    

A nontrivial \emph{path} $\mu$ in a graph $E$ is a finite sequence of edges { $\mu=\mu_1\dots \mu_n$
such that $r(\mu_i)=s(\mu_{i+1})$} for $i=1,\dots,n-1$.\footnote{{Some of our references use the opposite path convention where
$s(\mu_i)=r(\mu_{i+1})$.}}
In this case, {$s(\mu):=s(\mu_1)$ and $r(\mu):=r(\mu_n)$} are the
\emph{source} and \emph{range} of $\mu$, respectively, and $n$ is the \emph{length} of $\mu$, denoted $|\mu|$. We also say that
$\mu$ is {\emph{a path from $u:=s(\mu_1)$ to $v:=r(\mu_n)$}} and write $u\geq v$. 
We  { write  $\mu^0$ for} the set of the vertices of $\mu$, i.e.,
$\mu^0:=\{s(e_1),r(e_1),\dots,r(e_n)\}$. 

We view an element $v$ of $E^{0}$ as a path of length $0$. In this case $s(v)=r(v)=v$. 
The set of all (finite) paths of a graph $E$ is denoted by ${\rm Path}(E)$.
{ An \emph{infinite path} is an infinite sequence of edges $x=x_1x_2\dots$ such that
$r(x_i)=s(x_{i+1})$ for every $i\in \N$.  We define the source of an infinite path to be $s(x):=s(x_1)$.}
 We denote the set of all infinite paths by $E^{\infty}$. 

A subset $H$ of $E^0$ is called \emph{hereditary} if $v\ge w$ and $v\in H$ imply $w\in H$. A
hereditary set is \emph{saturated} if every regular vertex which feeds into $H$ and only into $H$ is again
in $H$, that is, if $s^{-1}(v)\neq \emptyset$ is finite and $r(s^{-1}(v))\subseteq H$ imply $v\in H$.  
For a hereditary subset $H$ we denote by $\overline H$ the saturated closure of $H$, i.e., the smallest 
hereditary and saturated subset of $E^0$ containing $H$ (see \cite{AAS} for the construction of $\overline H$).
{Given a non empty hereditary and saturated subset $H$ of $E^0$, define
$$F_E(H):=\{\a \in {\rm Path}(E) \mid  s(\a_1)\in E^0\setminus H, r(\a_i)\in 
E^0\setminus H \mbox{ for } i<\vert \alpha\vert, r(\a_{\vert \alpha\vert})\in H\}.$$ 
}

\medskip

\noindent \textbf{Leavitt path algebras.}   Given a (directed) graph $E$ and a commutative unital ring  $R$, the {\it path $R$-algebra} of $E$,
denoted by $RE$, is defined as the free associative $R$-algebra generated by the
set of paths of $E$ with relations:
\begin{enumerate}
\item[(V)] $vw= \delta_{v,w}v$ for all $v,w\in E^0$.
\item [(E1)] $s(e)e=er(e)=e$ for all $e\in E^1$.
\end{enumerate}
The {\it Leavitt path algebra of} $E$ {\it with coefficients in} $R$, denoted $L_R(E)$, 
is the quotient of the path algebra $R\widehat{E}$ by the ideal of $R\widehat{E}$ generated by the relations:
\begin{enumerate}    
\item[(CK1)] $e^*e'=\delta _{e,e'}r(e) \ \mbox{ for all } e,e'\in E^1$.
\item[(CK2)] $v=\sum _{\{ e\in E^1\mid s(e)=v \}}ee^* \ \ \mbox{ for every}\ \ v\in  {\rm Reg}(E).$
\end{enumerate}
Observe that in $R\widehat{E}$ the relations (V) and (E1) remain valid and that the following is also satisfied:
\begin{enumerate}
\item [(E2)] $r(e)e^*=e^*s(e)=e^*$ for all $e\in E^1$.
\end{enumerate}
It is not difficult to show that
\[L_R(E) = \operatorname{span} \{\alpha\beta^{\ast } \ \vert \ \alpha, \beta \in {\rm Path}(E)\}.\]
and that $L_{R}(E)$ is a $\mathbb{Z}$-graded $R$-algebra, where
for each $n\in\mathbb{Z}$, the degree $n$-component $L_{R}(E)_{n}$ is spanned by the set
\[\{\alpha \beta^{\ast }\ \vert \  \alpha, \beta \in {\rm Path}(E)\ \hbox{and}\  |\alpha|-|\beta|=n\}.\]

\begin{remark} 
\rm Our notation for elements of $L_R(E)$ and elements of 
the set ${\rm Path}(E)$ is the same.  Note that we will often be considering elements of ${\rm Path}(E)$
where we will not assume any of the structure that comes with elements of the quotient 
$L_R(E)$.
\end{remark}

\medskip

\noindent\textbf{Groupoids and Steinberg algebras.}  
A groupoid $G$ is a generalisation of a group in which the `binary operation' is only partially defined. For a 
precise definition see \cite{Renault}.  The unit space of $G$ is denoted $G^{(0)}$  and is defined such that
\[G^{(0)} := \{\gamma \gamma^{-1} \ \vert \ \gamma \in G\} = \{\gamma^{-1}\gamma \ \vert \ \gamma \in G\}.\]
Groupoid \emph{source} and \emph{range} maps $r,s:G \to G^{(0)}$ are defined such that
\[
 s(\gamma) = \gamma^{-1} \gamma \quad \text{ and } \quad r(\gamma) = \gamma \gamma^{-1}.
\]
Although the notation and terminology of $r$ and $s$ have both a graphical and a groupoid interpretation,
it will be clear from context which one we mean.
{We say $U \subseteq G^{(0)}$ is \emph{invariant} if $s(\gamma) \in U$ implies 
$r(\gamma) \in U$.  Equivalently, $U$ is invariant if $r(\gamma) \in U$ implies $s(\gamma) \in U$.}

Steinberg algebras, introduced in \cite{Steinberg}, are algebras associated to \emph{ample} groupoids and 
are defined 
as follows.   First, a \emph{topological groupoid} is a groupoid equipped with a topology such that
composition and inversion are continuous.  Next, we say an open subset $B$ of a groupoid $G$ is 
an \emph{open bisection}
if $r$ and $s$ restricted to $B$ are homeomorphisms onto an open subset of $G^{(0)}$.  
Finally, we say a topological groupoid $G$ is \emph{ample} if there is a basis for its topology 
consisting of compact open bisections.

{
Suppose $G$ is a Hausdorff ample groupoid and $R$ is a commutative unital ring.
The Steinberg algebra associated to $G$ denoted $A_R(G)$ is 
the $R$-algebra  of all locally constant functions $f:G \to R$ such that
\[\supp f =\{\gamma \in G \ \vert \ f(\gamma) \neq 0\}\] is compact.  
Thus, for $f \in A_R(G)$, we have that $\supp f$ is both compact and open.  }
It turns out that
\[A_R(G) = \operatorname{span} \{1_B \ \vert \ B \text{ is a compact open bisection}\},\] 
where $1_B:G \to R$ is the characteristic function of $B$.  Addition and scalar multiplication 
in $A_R(G)$ are defined pointwise.  Multiplication is given by convolution such that
for compact open bisections $B$ and $D$ we have $1_B 1_D = 1_{BD}.$ 

\medskip

{The center of the Steinberg algebra $A_R(G)$, denoted $\zz(A_R(G))$, is characterised in \cite[Proposition~4.13]{Steinberg}.
First, we say $f \in A_R(G)$ is a \emph{class function} if $f$ satisfies the following conditions:
\begin{enumerate}
 \item\label{it1:class} $f(x)\neq 0 \implies s(x)=r(x);$ 
 \item\label{it2:class} $s(x)=r(x)=s(z) \implies f(zxz^{-1})=f(x).$ 
\end{enumerate}
Then  \cite[Proposition~4.13]{Steinberg} says that the center of $A_R(G)$ is
\[\zz(A_R(G)) = \{f \in A_R(G) \ \vert \ f \text{ is a class function}\}.\]}

\noindent{
\noindent\textbf{The Steinberg algebra model of a Leavitt path algebra.}
Next, we recall the construction of a groupoid $G_E$ from an arbitrary graph $E$,
which was introduced in \cite{KPRR} for row-finite graphs 
and generalised to arbitrary graphs in \cite{Pat2002}.  We use the notation of 
\cite[Example~2.1]{CS}.} 
Define
\begin{align*}X &:= E^\infty \cup \{\mu \in {\rm Path}(E) \mid r(\mu)\text{ is a sink}\} \cup 
	\{\mu \in {\rm Path}(E) \ \vert \ r(\mu) \in \operatorname{Inf}(E)\} \text{ and }\\
G_E &:= \{(\alpha x, |\alpha| - |\beta|, \beta x) \ \vert \ \alpha,\beta \in {\rm Path}(E),
    x \in X, r(\alpha) = r(\beta) = s(x)\}.\end{align*}
A pair of elements in $G_E$ is {\it{composable}} if and only if it is of the form $((x,k,y),(y,l,z))$ and then
the composition and inverse maps are defined such that
\[
    (x,k,y)(y,l,z) := (x, k+l, z) \quad\text{ and }\quad (x,k,y)^{-1} := (y,-k,x).
\]
Thus
\[\geo =\{(x, 0, x) \ \vert \ x \in X\}, \text{ which we identify with $X$}.\]

Next we show how $G_E$ can be viewed as an ample groupoid. For $\mu \in {\operatorname{Path}}(E)$ define
\[
    Z(\mu):= \{\mu x \mid x \in X, r(\mu)=s(x)\} \subseteq X.
\]    
    
 For $\mu \in {\rm Path}(E)$ and a finite $F \subseteq s^{-1}(r(\mu))$, define
\[
Z(\mu\setminus F) := Z(\mu) \cap \Big(\geo \setminus\Big(\bigcup_{\alpha \in F} Z(\mu\alpha)\Big)\Big).
\]

Then the sets of the form $Z(\mu \setminus F)$ are a basis of compact open sets for a 
Hausdorff topology on $X = \geo$ by \cite[Theorem~2.1]{Webster:xx11}.

\medskip

For each $\mu,\nu \in {\rm Path}(E)$ with $r(\mu) = r(\nu)$, 
and finite $F \subseteq {\rm Path}(E)$ such that $r(\mu)=s(\alpha)$ 
for all $\alpha \in F$, define
\[
Z(\mu,\nu) := \{(\mu x, |\mu| - |\nu|, \nu x) \ \vert \ x \in X, r(\mu)= s(x)\},
\]
and then {
\[
Z((\mu,\nu) \setminus F) := Z(\mu,\nu) \cap
            \big(G_E \setminus \Big(\bigcup_{\alpha \in F} Z(\mu\alpha,\nu\alpha)\Big)\Big).
\]}
Now the collection $Z((\mu,\nu) \setminus F)$ forms a basis of 
compact open bisections that generates a topology such that
$G_E$ is a Hausdorff ample groupoid.  
\bigskip

{Now \cite[Example 3.2]{CS} shows that the map
\begin{equation}
 \label{def:pi} 
 \pi:L_R(E) \to A_R(G_E) \text{ such that }  \pi(\mu\nu^\ast-\sum_{\alpha\in F}\mu\a\a^\ast\nu^\ast) =1_{Z((\mu, \nu)\setminus F)}
\end{equation}
extends to a $\Z$-graded isomorphism where for $n \in \Z$
\[A_R(G_E)_n := \{f \in A_R(G_E) \ \vert \ (x,k,y) \in \supp f \implies k=n\}. 
\]}

\bigskip


\section{Compactness}


In this section we establish a key tool for determining the center of a Leavitt path algebra $L_R(E)$ via the Steinberg algebra model. 
We characterize when an open invariant subset of $\geo$ is also compact in terms of $E$.

Let $H$ be a hereditary and saturated subset  of $E^0$, and let $v\in E^0$.  We say that $v$ is a 
\index{breaking vertex} \emph{breaking vertex of} $H$ if $v$ belongs
to the set
$$\index{$B_H$} B{_H}:=\{v\in E^0\setminus H \ \vert \ v\in {\rm Inf}(E) \ \text{and} \ 0 < \vert s^{-1}(v) \cap r^{-1}(E^0\setminus H)\vert < \infty\}.$$
(See, for example, \cite[Definitions 2.4.3]{AAS}).
In words, $B_H$ consists of those vertices which are infinite emitters, which do not belong to $H$, 
and for which the ranges of the edges they emit are all, except for a finite (but nonzero) number, inside $H$.
For  $\a\in {\rm Path}(E)$  such that $r(\a)\in B_H$ write
\[F_\a:= \{e\in s^{-1}(r(\a))\ \vert \ r(e)\notin H \} \quad \text{and} \quad F'_\a:= \{e\in s^{-1}(r(\a))\ \vert \ r(e)\in H \}.\]
Thus $F_\a$ is a finite set, $F'_\a$ is an infinite set and $s^{-1}(r(\a)) = F_\a \cup F'_\a$.

Define, as in \cite{AAS}
 \[\te:= \{(H,S) \ \vert \  H \text{ is a  
hereditary and saturated set of vertices and } S \subseteq B_H\}.\]
Following \cite[Definitions 3.3]{CMMS} for $(H,S) \in \te$ we
define
\begin{align}
 U_{H}&:=\{x\in\geo \ \vert\ r(x_n)\in H \text{ for some } n\geq 0\},\label{UH}\\
 U_S& :=\{\alpha\in {\rm Path}(E)\ \vert\ r(\alpha)\in S \} \label{US} \text{ and}\\
 U_{H,S}&:=U_H\sqcup U_S. \label{UHS}
\end{align}
(Note for a path $x$, we adopt the convention that $r(x_0):= s(x)$.)
Then the map $(H,S) \mapsto U_{H,S}$ from $\te$ to the collection $\oe$ of open
invariant subset of $\geo$ is a bijection by \cite[Theorem~3.3]{CMMS}.
In the next lemma, we show how each $U_{H,S}$ can be expressed as a disjoint union
of basis elements.  First, for a hereditary and saturated set of vertices $H$, define
$$F_E'(H):=\{\a \in F_E(H)\ \text { such that } \ s(\a_{\vert\a\vert})\notin B_H  \}.$$

\begin{lemma}\label{lm:disjoint}
Let $E$ be an arbitrary graph and $(H, S)\in \te$. 
Then:
$$
U_{H, S} = \Big(\underset{v\in H}\sqcup{Z(v)}\Big)\sqcup
\Big( \underset{\a\in F'_E(H)}\bigsqcup Z(\a)\Big) \sqcup 
 \Big( {\underset{
\tiny{\begin{matrix}
\a\in {\rm Path}(E)\\
r(\a) \in S
\end{matrix}}}{\bigsqcup}} Z(\a\setminus F_\a)\Big).
$$
\end{lemma}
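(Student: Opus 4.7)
The plan is to verify the decomposition by establishing the two set inclusions and then confirming pairwise disjointness of the three families. For the inclusion $\supseteq$ one unpacks each basic open on the right and checks that it sits inside $U_{H,S}$: elements of $Z(v)$ with $v\in H$ have source $v\in H$, so lie in $U_H$; elements of $Z(\alpha)$ with $\alpha\in F_E'(H)$ satisfy $r(x_{|\alpha|})=r(\alpha)\in H$, hence lie in $U_H$; elements of $Z(\alpha\setminus F_\alpha)$ with $r(\alpha)\in S$ are either $\alpha$ itself (which lies in $U_S$) or of the form $\alpha e y$ with $e\notin F_\alpha$, forcing $r(e)\in H$ and so placing them in $U_H$.

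For the reverse inclusion I would take $x\in U_{H,S}$ and locate the piece containing it. If $x\in U_S$, write $x=\alpha$ with $r(\alpha)\in S$; hereditariness of $H$ prevents $\alpha$ from visiting $H$, so $x=\alpha$ sits in $Z(\alpha\setminus F_\alpha)$ (it cannot coincide with $\alpha e z$ for $e\in F_\alpha$ since those are strictly longer). If $x\in U_H$, let $n=\min\{k\geq 0: r(x_k)\in H\}$. For $n=0$, $x\in Z(s(x))$ with $s(x)\in H$. For $n\geq 1$ the path $\alpha:=x_1\cdots x_n$ lies in $F_E(H)$ by minimality; if $s(\alpha_{|\alpha|})\notin B_H$ then $\alpha\in F_E'(H)$ and $x\in Z(\alpha)$, while if $s(\alpha_{|\alpha|})\in B_H$ one invokes the invariance of $U_{H,S}$ and the bijection of \cite[Theorem~3.3]{CMMS} to conclude $s(\alpha_{|\alpha|})\in S$, placing $x$ into $Z(\alpha'\setminus F_{\alpha'})$ with $\alpha':=x_1\cdots x_{n-1}$ (the next edge $x_n$ satisfies $r(x_n)\in H$, so $x_n\notin F_{\alpha'}$).

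Pairwise disjointness is then verified family by family. Within each family, distinct indices yield disjoint $Z$-sets via a short prefix argument together with the definitions of $F_E(H)$ and $F_\alpha$: two distinct $\alpha,\beta\in F_E'(H)$ cannot have one extending the other, because both reach $H$ only at their terminal edges, and in the third family the exclusions by $F_\alpha$ rule out the analogous overlap by hereditariness applied to the would-be common extension edge. Across families, hereditariness of $H$ separates sets with source in $H$ (first family) from those with source outside $H$ (second and third), while the contrast $s(\alpha_{|\alpha|})\notin B_H$ defining $F_E'(H)$ versus $r(\beta)\in S\subseteq B_H$ for the third family separates the second and third.

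The main obstacle is the sub-case where $x\in U_H$ first enters $H$ via a breaking vertex: the correct placement of $x$ in the third family depends on this vertex lying in $S$, which follows from invariance of $U_{H,S}$ and \cite[Theorem~3.3]{CMMS}.
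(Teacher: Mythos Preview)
Your overall strategy mirrors the paper's, but the step you yourself flag as ``the main obstacle'' is a genuine gap, and it cannot be repaired in the way you suggest. When $x\in U_H$ first enters $H$ through an edge whose source $v:=s(\alpha_{|\alpha|})$ lies in $B_H$, you claim that invariance of $U_{H,S}$ together with the bijection of \cite[Theorem~3.3]{CMMS} forces $v\in S$. It does not: the vertex $v$, regarded as the length-zero boundary path in $X$ (it is an infinite emitter), is \emph{not} in the $G_E$-orbit of $x$, since $x$ continues past $v$ and hence is not tail-equivalent to the path $v$. Invariance therefore tells you nothing about whether $v\in U_{H,S}$, and the bijection only says different pairs give different open invariant sets, which is irrelevant here. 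Concretely, let $E$ have vertices $v,w$, infinitely many edges $v\to w$, and a single loop $e$ at $v$; put $H=\{w\}$ and $S=\emptyset$. Then $B_H=\{v\}$, $F_E'(H)=\emptyset$, and the right-hand side of the lemma collapses to $Z(w)=\{w\}$, whereas $U_{H,\emptyset}=U_H$ contains every finite path ending at $w$. So for $S\subsetneq B_H$ the asserted equality can fail, and no appeal to invariance will rescue it.

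The paper's own proof is equally terse at this point (it writes ``$x\in Z(\alpha)$ or $x\in Z(\alpha\setminus F_\alpha)$'' without saying how the second option lands in the third family indexed by $r(\alpha)\in S$), so the issue is with the statement rather than with your write-up. Under the additional hypothesis $S=B_H$ your argument is correct: then $v\in B_H=S$ automatically, $\alpha':=x_1\cdots x_{n-1}$ has $r(\alpha')\in S$, and $x\in Z(\alpha'\setminus F_{\alpha'})$ as you describe. Since the subsequent applications in the paper (Proposition~\ref{prop:compacidad} and Theorem~\ref{thm:center}) in effect use the lemma only for $S=B_H$, the right fix is to add that hypothesis rather than to invoke invariance.
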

\begin{proof}
Denote by 
$A_1$, $A_2$ and $A_3$, respectively, the three pairwise disjoint sets appearing in the 
last line of the statement of the lemma.
First we show that $U_{H, S} \subseteq A_1\cup A_2\cup A_3$.
Notice that $U_S \subseteq A_3$. Pick $x\in U_H$.  Then  there exists 
$n$ such that $r(x_n)\in H$.
If $x$ is a sink or an infinite emitter, then it is in $H$ and we are done. 
If $x$ is not a vertex, fix the smallest $n \geq 0$ such that $r(x_n) \in H$ and let \[
\alpha = \begin{cases} x_1\dots x_{n} & \text{ if }n>0 \text{ and }\\
          s(x) &\text{ otherwise}.
         \end{cases}\]
Then $\alpha \in F_E(H)\cup H$ and   
$x = \alpha x_{n}x_{n+1} \dots \in Z(\alpha)$ or $x\in Z(\a\setminus F_\a)$ and we are done.
{Thus we have} $U_{H, S}\subseteq A_1 \cup A_2 \cup A_3$.

{The reverse containment and that the sets involved are disjoint are both straightforward.}
\end{proof}

\begin{definition}
\rm
Let $H$ be a hereditary and saturated set of vertices in an arbitrary graph $E$. We say that $H$ satisfies the \emph{Finiteness Condition} 
(\emph{Condition} (F) for short) if 
$$\vert H \cup F_E'(H)\ \cup \{\a\in {\rm Path}(E) \ \vert \ r(\a) \in B_H\}\vert < \infty.
$$
\end{definition}

\begin{proposition}\label{prop:compacidad}
Let $E$ be an arbitrary graph. Fix $(H, S)\in \te$. Then $U_{H, S}$ is compact if and only if 
$H$ satisfies Condition {\rm (F)}.
\end{proposition}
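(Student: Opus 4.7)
The plan is to reduce the question to a finiteness statement about the disjoint decomposition provided by Lemma~\ref{lm:disjoint}, which expresses $U_{H,S}$ as the pairwise disjoint union of $Z(v)$ for $v\in H$, of $Z(\alpha)$ for $\alpha\in F_E'(H)$, and of $Z(\alpha\setminus F_\alpha)$ for $\alpha\in{\rm Path}(E)$ with $r(\alpha)\in S$. Each summand is a basic compact open set of $\geo$ (by the basis description of the topology), and each is nonempty, so compactness of $U_{H,S}$ will be equivalent to the three index sets being finite.

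For the ``if'' direction I would assume Condition~(F), from which $H$, $F_E'(H)$, and $\{\alpha\in{\rm Path}(E):r(\alpha)\in B_H\}$ are finite; since $S\subseteq B_H$, the index set $\{\alpha:r(\alpha)\in S\}$ of the third family is finite as well. The decomposition then becomes a finite disjoint union of compact open sets, so $U_{H,S}$ is compact.

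For the converse I would start with $U_{H,S}$ compact. The disjoint cover from Lemma~\ref{lm:disjoint} then has only finitely many (nonempty) summands, which immediately yields $|H|<\infty$, $|F_E'(H)|<\infty$, and $|\{\alpha:r(\alpha)\in S\}|<\infty$. To upgrade the last of these to the full Condition~(F), I would argue directly inside the graph: for each $v\in B_H$ fix a single edge $e_v:v\to H$ (it exists because $v$ emits infinitely many edges into $H$); the family $\{Z(e_v)\}_{v\in B_H}$ lies inside $U_H\subseteq U_{H,S}$ and is pairwise disjoint, so compactness forces $|B_H|<\infty$. Then, for each fixed $v\in B_H$, fix one edge $e:v\to H$; an infinite collection of distinct paths $\alpha_i$ ending at $v$ would produce an infinite pairwise disjoint family $\{Z(\alpha_i e)\}\subseteq U_H$, contradicting compactness.

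The main obstacle will be this last disjointness claim when the $\alpha_i$'s are comparable by prefix, which can happen only through a closed cycle based at $v\in B_H$. Since $H$ is hereditary, any such cycle lies wholly in $E^0\setminus H$, so the chosen edge $e$ into $H$ is not an edge of the cycle; a short prefix analysis then delivers the required disjointness of the sets $Z(\alpha_i e)$ and closes the argument.
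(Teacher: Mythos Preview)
Your overall plan coincides with the paper's: both directions go through the disjoint decomposition in Lemma~\ref{lm:disjoint}. The ``if'' direction is identical to the paper's --- Condition~(F) makes the decomposition a finite union of compact open sets, hence $U_{H,S}$ is compact. For the converse, the paper simply argues that if Condition~(F) fails then the decomposition of Lemma~\ref{lm:disjoint} is a disjoint open cover of $U_{H,S}$ with no finite subcover, contradicting compactness.

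You go further than the paper in one respect: you notice that the third family in Lemma~\ref{lm:disjoint} is indexed by paths with range in $S$, whereas Condition~(F) concerns paths with range in $B_H$, and you attempt to bridge this with a direct graph argument. That extra step is where the proposal has a genuine gap. The inference ``$\{Z(e_v)\}_{v\in B_H}$ is a pairwise disjoint family of nonempty open subsets of the compact set $U_{H,S}$, hence $|B_H|<\infty$'' is not valid: a compact Hausdorff (even totally disconnected) space can contain infinitely many pairwise disjoint nonempty clopen subsets --- take the Cantor set, or $\{0\}\cup\{1/n:n\in\mathbb N\}\subseteq\mathbb R$. The same objection applies verbatim to the family $\{Z(\alpha_i e)\}$ in your last paragraph. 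Pairwise disjointness alone buys nothing; to contradict compactness you must exhibit an open \emph{cover} of $U_{H,S}$ with no finite subcover, and your families are not covers.

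The cleanest repair is the route signalled in Remark~\ref{rmk:closed}: since $\geo$ is Hausdorff, a compact $U_{H,S}$ is closed, and then the cited result from \cite{CMMS} forces $S=B_H$. Once $S=B_H$, the third index set in Lemma~\ref{lm:disjoint} is exactly $\{\alpha\in{\rm Path}(E):r(\alpha)\in B_H\}$, and the disjoint-cover argument you already gave yields Condition~(F) directly, with no additional graph-theoretic work needed.
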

\begin{proof}
Suppose $U_{H, S}$ is compact.  By way of contradiction, suppose that {
\[\vert H \cup F_E'(H)\ \cup \{\a\in {\rm Path}(E) \ \vert \ r(\a) \in B_H\}\vert =\infty.\]}   
Since  the sets involved in Lemma \ref{lm:disjoint} provide a disjoint open cover of $U_{H, S}$ with no finite 
subcover  we get a contradiction to the fact that $U_{H, S}$ is compact and therefore Condition (F) is satisfied.

For the reverse implication, suppose that $H$ satisfies Condition (F).  
By Lemma \ref{lm:disjoint}  we have that $U_{H,S}$ is a union of a finite number of compact sets, thus $U_{H,S}$ is compact.
\end{proof}
\medskip

\begin{remark}\label{rmk:closed}
\rm
Looking at the graph properties involved in Proposition \ref{prop:compacidad} and in \cite[Proposition 4.2]{CMMS} we have another proof of the fact that  if 
$U_{H, S}$ is compact, then it is closed.
On the other hand, by \cite[Remark 4.2]{CMMS}, { $U_{H, S}$ being } closed implies 
$S=B_H$. In fact, if $H$ satisfies a weaker property, concretely \cite[Proposition 4.1 (ii)]{CMMS}, 
then $S=B_H$.
\end{remark}

{Let $\tec$ denote the set of elements $(H,S) \in \te$ such that
$S=B_H$ and $U_{H, B_H}$ is compact (that is,  $H$ satisfies Condition (F) by Proposition~\ref{prop:compacidad}).
Further, let $\tecm$ denote the subset of $\tec$ consisting of those pairs $(H, B_H)$ such that $U_{H, B_H}$ is 
minimal as a compact set.}
\medskip

{Before moving from the graph and groupoid structure into the structure of the algebras, 
we investigate how cycles without exit give rise to elements of $\tec$. 
The following characterization will help us
to show which cycles without exit contribute to the $n$-component of the center of a Leavitt path algebra.}

\begin{lemma}\label{lem:cicloscentrales}
Let $c$ be a cycle without exits in an arbitrary graph $E$. Let $H:=\overline{c^0}$. The following are equivalent:
\begin{enumerate}[\rm (i)]
\item $\vert F_E(c^0) \vert < \infty$.
\item $H$ satisfies Condition {\rm (F)} and $B_H=\emptyset$.
\end{enumerate}
\end{lemma}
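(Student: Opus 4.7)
The plan is to translate (i) and (ii) into statements about the saturation hierarchy of $H := \overline{c^0}$. Two structural observations will be used throughout: (a) every $v \in H \setminus c^0$ is a regular vertex, and by heredity all its out-edges lie in $H$; (b) the saturation construction presents $H \setminus c^0$ as a union of strata, where a vertex introduced in the $(n+1)$-st stratum has all its out-edges in earlier strata, making $H \setminus c^0$ acyclic and guaranteeing that every vertex of $H$ admits a finite path inside $H$ reaching $c^0$.

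For (i) $\Rightarrow$ (ii) I would argue in three steps. First, if some $v \in B_H$ existed, then $v$ would emit infinitely many edges into $H$; extending each such edge inside $H$ (via (a) and (b)) to its first entry into $c^0$ produces infinitely many distinct paths in $F_E(c^0)$, contradicting (i). Hence $B_H = \emptyset$. Second, if $H \setminus c^0$ were infinite, then for each $v \in H \setminus c^0$ a shortest path from $v$ to $c^0$ inside $H$ lies in $F_E(c^0)$, and distinct sources give distinct paths, again contradicting (i); so $|H| < \infty$. Third, I would introduce the ``prefix-into-$H$'' map
\[
\phi\colon \{\a \in F_E(c^0) \mid s(\a) \notin H\} \longrightarrow F_E(H), \qquad \a \mapsto \a_1 \cdots \a_k,
\]
with $k$ minimal such that $r(\a_k) \in H$, and check that $\phi$ is surjective: given $\b \in F_E(H)$, extend $\b$ inside $H$ to its first entry into $c^0$ by (a) and (b). Thus $|F_E(H)| \le |F_E(c^0)| < \infty$. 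Since $B_H = \emptyset$ forces $F_E'(H) = F_E(H)$ and empties the third set of Condition (F), Condition (F) follows.

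For (ii) $\Rightarrow$ (i) the same map $\phi$ runs the bound in the opposite direction. I would decompose $F_E(c^0)$ according to whether $s(\a) \notin H$ or $s(\a) \in H \setminus c^0$. The paths with source in $H \setminus c^0$ stay inside $H \setminus c^0$ until their final edge lands in $c^0$, and by (b) together with $|H| < \infty$ they are finite in number. The remaining paths map under $\phi$ into $F_E(H)$, which is finite by Condition (F) with $B_H = \emptyset$, and each fiber $\phi^{-1}(\b)$ consists of extensions of $\b$ by a path in the finite acyclic set $H \setminus c^0$ followed by a single edge into $c^0$, hence finite by the same acyclicity argument. Consequently $|F_E(c^0)| < \infty$.

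The main obstacle I anticipate is a clean proof of the acyclicity of $H \setminus c^0$ and the verification that $\phi$ is well-defined and surjective, since the conditions on source and on interior vertices imposed by $F_E(c^0)$ and $F_E(H)$ differ subtly (one excludes $c^0$, the other excludes $H$) and must be tracked carefully when constructing the extensions.
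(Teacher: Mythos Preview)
Your proposal is correct and follows essentially the same route as the paper: both arguments rest on the saturation stratification of $H=\overline{c^0}$ (yielding acyclicity of $H\setminus c^0$ and a path from every $v\in H$ down to $c^0$) and on decomposing each $\alpha\in F_E(c^0)$ at its first entry into $H$. Your explicit map $\phi$ and the fiber count make rigorous a step the paper handles tersely---in particular, you spell out the bound $|F_E(H)|\le |F_E(c^0)|$ needed in (i)$\Rightarrow$(ii), which the paper absorbs into the phrase ``it suffices to see that $|H|<\infty$''.
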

\begin{proof}
(i) $\Rightarrow$ (ii). Since $\vert F_E(c^0) \vert < \infty$, necessarily  $B_H=\emptyset$. This implies $F'_E(H)=F_E(H)$, hence, to show that $H$ satisfies Condition (F) it suffices to see that $\vert H \vert< \infty$. 
Assume on the contrary that $\vert H \vert = \infty$. Let $\{u_i\}_{i \in \N}\subseteq H \setminus c^0$ be a subset of different elements. By the definition of the saturated closure of a set, for any $i \in \N$ there exists a path $\mu_i\in F_E(c^0)$ such that $s(\mu_i)=u_i$. 
Then $\{\mu_i\}_{i\in \N}$ is an infinite set of paths inside $F_E(c^0)$, a contradiction. 

(ii) $\Rightarrow$ (i).  The hypothesis in (ii) imply $\vert H \cup F_E(H)\vert$. To show our claim, let $\alpha \in F_E(c^0)$. We may decompose $\alpha=\beta\gamma$, where $\beta$ and $\gamma$ are paths such that $\beta\in F_E(H)$. Since $F_E(H)$ is finite, to show our claim it is enough to prove that there are finitely many paths in $F_E(c^0)$ whose source is in $H$. Note that this is true because $H$ is finite and, by the construction of the saturated closure,  $H=\overline{c^0}$
contains the vertices in $c^0$ and other regular vertices which are not the { base} of any cycle.
\end{proof}

\section{The center}\label{TheCenter}


In this section we will prove the main result of the paper, which is 
Theorem \ref{thm:center}.  It determines the elements in the center of any Leavitt path algebra
$L_R(E)$. We first describe those central elements in the zero component and then {we consider } 
those in the $n$-component (which {is nonzero }
only when there are cycles without exit). Moreover, we determine a basis for the center of $L_R(E)$.

First we establish some { additional } notation.
Let $E$ be an arbitrary graph and let $c=e_1\dots e_n$ be a cycle based at $v=s(e_1)$. Then, for each $2\leq i \leq n$ the path 
\[c_i= e_i\dots e_ne_1\dots e_{i-1}\]
is a cycle based at the vertex $s(e_i).$
We will refer to \emph{the cycle of} {$[c]$ as the collection of cycles $\{c_i\}_{i=1}^n$, where $c_1:= c$.}
Denote by ${\mathcal C}_{ne}$ the set of all cycles without exits and by 
${\mathcal C}_{ne}^f $ the subset of ${\mathcal C}_{ne}$ consisting of those  {cycles  $c$ 
for which the number of paths into $c$ is finite; that is $\vert F_E({c^0}) \vert < \infty$. 
Finally, } define 
$$[{\mathcal C}_{ne}^f ]: =\{[c] \ \vert \ c\in {\mathcal C}_{ne}^f \}.$$
\medskip

{Recall that the center of $A_R(G_E)$ is the set of all class functions by \cite[Proposition~4.13]{Steinberg}.  
Before proving our main theorem, we establish some results about the structure of class functions.}  
\begin{lemma}\label{UNO}
Let $E$ be an arbitrary graph, $R$ a  commutative unital ring and let $U$ be a compact open invariant subset of $\geo$. Then $1_U$ is a class function. 

\end{lemma}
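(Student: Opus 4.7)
The plan is to verify directly the two conditions in the definition of a class function and, at the outset, check that $1_U$ actually belongs to $A_R(G_E)$. Since $U \subseteq \geo$ is open and $\geo$ is open in $G_E$, the set $U$ is open in $G_E$. It is also compact by hypothesis. Therefore $1_U$ is locally constant with compact support, so $1_U \in A_R(G_E)$.

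For condition \eqref{it1:class}, I would observe that $1_U(x) \neq 0$ forces $x \in U \subseteq \geo$, and any element of the unit space satisfies $s(x) = r(x) = x$.

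For condition \eqref{it2:class}, suppose $s(x) = r(x) = s(z)$. The first equality places $x$ in $\geo$, so $x = s(x) = s(z)$; hence $zxz^{-1} = z\, s(z)\, z^{-1} = zz^{-1} = r(z)$. Thus the identity to be proved reduces to
\[1_U(r(z)) = 1_U(s(z)).\]
This is exactly the statement that $s(z) \in U$ if and only if $r(z) \in U$, which is the invariance of $U$ under the groupoid action. So the main point is simply that the two parts of the class-function definition translate, for the characteristic function of a subset of the unit space, into (i) the support being contained in $\geo$ and (ii) invariance of the support.

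There is no real obstacle here; the only thing to be careful about is not to confuse the groupoid $s, r$ with the graph $s, r$ in the computation above, and to recall from the preliminaries that invariance of $U$ can be stated symmetrically as ``$s(\gamma) \in U \iff r(\gamma) \in U$,'' which is precisely what is needed in the last step.
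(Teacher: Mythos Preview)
Your approach is essentially the same as the paper's, and the computations are almost all correct. There is, however, one genuine slip in the verification of condition~\eqref{it2:class}. You write that ``the first equality places $x$ in $\geo$,'' i.e.\ that $s(x)=r(x)$ forces $x$ to be a unit. This is false in general: any element of an isotropy group satisfies $s(x)=r(x)$ without being a unit. In $G_E$ itself, if $c$ is a cycle without exits then $(c^\infty,\,|c|,\,c^\infty)$ has equal source and range but lies outside $\geo$.

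The fix is short. If $x\notin\geo$, then $zxz^{-1}\notin\geo$ either (since $zxz^{-1}\in\geo$ would mean $zxz^{-1}=r(z)$, hence $zx=z$, hence $x=s(z)\in\geo$); therefore $1_U(x)=0=1_U(zxz^{-1})$ because $U\subseteq\geo$. If $x\in\geo$, your argument goes through verbatim: $x=s(z)$, $zxz^{-1}=r(z)$, and invariance gives $1_U(s(z))=1_U(r(z))$. The paper's proof handles this by working with the concrete triple description of $G_E$ and splitting into $x\in U$ versus $x\notin U$, using invariance of both $U$ and $\geo\setminus U$; it is the same idea and, read literally, also tacitly restricts attention to $x\in\geo$.
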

\begin{proof}
Assume that $U$ is a compact open invariant subset. It is straightforward to prove 
that $1_U :G_E\to R$ is a locally constant function (whose support is $U$). 
Further, Condition \eqref{it1:class} in the definition of a class function 
is satisfied because $U\subseteq \geo$ .  Now we prove Condition \eqref{it2:class}.
Let $x\in U$ and let $z=(y, k, x)\in G_E$. Then $zxz^{-1}= (y, 0, y) $. 
Since $U$ is invariant, $y\in U$. Similarly,  given that $\geo \setminus U$ is invariant, we have that  
$y\in \geo\setminus U$ for any $x\notin U$ and $z=(y, k, x)\in G_E$. This proves Condition \eqref{it2:class}.
\end{proof}
\medskip

\begin{lemma} \label{lm:coisupport}   
Let $E$ be an arbitrary graph and let $R$ be a commutative unital ring. Take $f\in A_R(G_E)$.  Then:
\begin{enumerate}[\rm (i)]
\item\label{it1:coisupport} $f^{-1}(b)$ is a compact open set for all nonzero $b \in R$.
 \end{enumerate}

  Furthermore, if $f \in \stcenter_0$ we have:
 
 \begin{enumerate}[\rm (i)]
 \setcounter{enumi}{1}
 \item\label{it2:coisupport} $\supp(f)\subseteq \geo$\textcolor{blue}{;}
  \item\label{it3:coisupport} {$f^{-1}(b)$} is a compact open invariant set for all nonzero $b \in R$; and
 \item\label{it4:coisupport} $\supp(f)$ is a compact open invariant set. 
\end{enumerate}
\end{lemma}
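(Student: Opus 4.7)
The plan is to treat parts (i)--(iv) in sequence; only (iii) makes nontrivial use of the class-function conditions, and (iv) then drops out at once.

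For (i), I would note that $f$ is locally constant by definition of $A_R(G_E)$, hence continuous when $R$ carries the discrete topology. So $f^{-1}(\{b\})$ is clopen in $G_E$. Since $b\neq 0$ forces $f^{-1}(b)\subseteq \supp f$, and the latter is compact, the closed subset $f^{-1}(b)$ of the compact set $\supp f$ is itself compact.

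For (ii), I would combine the grading with the first class-function condition: if $\gamma=(x,k,y)\in \supp f$ then $k=0$ because $f\in A_R(G_E)_0$, while \eqref{it1:class} gives $s(\gamma)=r(\gamma)$, i.e., $x=y$. Hence $\gamma=(x,0,x)\in\geo$ under the identification $\geo\cong X$.

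For (iii), the only content beyond (i) is invariance. I would pick $\gamma=(y,k,x)\in G_E$ with $s(\gamma)=x\in f^{-1}(b)$. By (ii), $x\in\geo$, so $x=(x,0,x)$ satisfies $s(x)=r(x)=s(\gamma)$, and \eqref{it2:class} applies with $z:=\gamma$. A direct calculation in the groupoid gives
\[\gamma\, x\, \gamma^{-1}=(y,k,x)(x,0,x)(x,-k,y)=(y,0,y),\]
which is $r(\gamma)\in\geo$. Therefore $f(r(\gamma))=f(x)=b$, so $r(\gamma)\in f^{-1}(b)$, which is invariance.

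For (iv), I would use the standard fact that a locally constant function on a compact space has finite image (extract a finite subcover of a cover by neighborhoods on which $f$ is constant). Consequently
\[\supp f=\bigsqcup_{b\in f(G_E)\setminus\{0\}} f^{-1}(b)\]
is a finite disjoint union of sets each of which is compact, open, and invariant by (iii), so $\supp f$ inherits all three properties. The only care point---not really an obstacle---is the bookkeeping in (iii): making sure the identification of $\geo$ with $X$ is applied consistently so that \eqref{it2:class}, stated for units $x$ and groupoid elements $z$ with $s(x)=r(x)=s(z)$, is invoked correctly and delivers exactly the invariance we want.
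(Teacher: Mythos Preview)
Your proof is correct and follows essentially the same route as the paper for all four parts. The only cosmetic difference is in (iv): the paper simply notes that $\supp f$ is compact open by definition of $A_R(G_E)$ and only checks invariance via the (possibly infinite) union $\supp f=\bigcup_{b\neq 0}f^{-1}(b)$, whereas you add the (unneeded but harmless) finiteness-of-image observation before taking the union.
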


\begin{proof}
\begin{enumerate}[\rm (i)]
 \item[(\ref{it1:coisupport})]  Because $f$ is continuous with respect to the discrete topology on $R$,  
  $f^{-1}(b)$ is clopen.  Since $\supp f$ is compact and $f^{-1}(b)$ is a closed subset of $\supp f$,
  $f^{-1}(b)$ is compact.  
 \item[(\ref{it2:coisupport})] Since $f$ is a class function in the 0-component,
 $${\rm supp}(f)\subseteq \{(x, k, y)\in G_E \ \vert \ x= y \ \text{ and }\ k=0\}=\geo.$$
 \item[(\ref{it3:coisupport})] By \eqref{it1:coisupport}, $f^{-1}(b)$ is compact and open.  We want to show that $f^{-1}(b)$ is invariant. 
Fix $y\in f^{-1}(b)$. Suppose there exist $x\in \geo$ and $k\in \mathbb{Z}$ such that $(x, k, y)\in G_E$. 
Since $(y, 0, y )= (y, -k, x)(x, 0, x) (x, k, y)$ and $f$ is a class function, $f(y)=f(x)=b$.  This means $x\in f^{-1}(b)$.
  
  \item[(\ref{it4:coisupport})] Since $\supp f$ is always compact and open for elements of $A_R(G_E)$, we
  must only show that $\supp f$ is invariant.  This follows from item~(\ref{it3:coisupport}) because
  \[\supp f = \bigcup_{b\in R,~b\neq 0} f^{-1}(b)\]
and  the union of invariant sets is invariant.
 \end{enumerate}

\end{proof}

{In the following two lemmas we show that cycles without exit give rise to class functions
that have support outside of $\geo$.} 
Extending the notation used for finite paths, 
for a (finite or infinite path) $x$, we denote by $x^0$ the set of vertices which are ranges 
or sources of the edges appearing in $x$.  {In the case where $x$ is} a vertex, $x^0=\{x\}$. 

\begin{lemma}\label{lem.primero}
Let $E$ be an arbitrary graph, $\alpha, \beta\in {\rm Path}(E)$ with $r(\alpha) = r(\beta)$ and $r(\a)E^\infty\ne\emptyset$. 
Then the following conditions are equivalent:  
\begin{enumerate}[\rm (i)]
\item\label{qwe1} 
For any $x$ in $r(\alpha)E^\infty$ we have $\alpha x = \beta x$.
\item\label{qwe2} {Either }$\a=\b$ or there is a cycle without exits $c$ and a path $\lambda\in {\rm Path}(E)$ such that $\a=\lambda c^k$ and $\b=\lambda c^q$ for some nonnegative 
integers $k,q$. 
\end{enumerate}
\end{lemma}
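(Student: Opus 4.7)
The plan is to handle the two implications separately, with the hard direction being (i) $\Rightarrow$ (ii), which requires extracting a cycle structure from the coincidence $\alpha x = \beta x$.

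For (ii) $\Rightarrow$ (i), I would argue directly. If $\alpha = \beta$ there is nothing to show. Otherwise write $\alpha = \lambda c^k$ and $\beta = \lambda c^q$ with $c$ a cycle without exits based at $v := r(\alpha) = s(c) = r(c)$. Because $c$ has no exits, at each vertex of $c^0$ the only outgoing edge lies in $c$, so the only infinite path starting at $v$ is $c^\infty$. For any $x \in r(\alpha)E^\infty$ we therefore have $x = c^\infty$, and
\[
\alpha x \;=\; \lambda c^{k} c^\infty \;=\; \lambda c^\infty \;=\; \lambda c^{q} c^\infty \;=\; \beta x.
\]

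For (i) $\Rightarrow$ (ii), I would first reduce to the case $|\alpha| \leq |\beta|$ by symmetry. If $|\alpha| = |\beta|$, picking any $x \in r(\alpha)E^\infty$ (nonempty by hypothesis) and reading off the first $|\alpha|$ entries of $\alpha x = \beta x$ yields $\alpha = \beta$. The substantive case is $|\alpha| < |\beta|$: set $n := |\beta| - |\alpha|$ and fix any $x \in r(\alpha)E^\infty$. Comparing $\alpha x = \beta x$ coordinate-by-coordinate gives, successively, $\alpha_i = \beta_i$ for $1 \leq i \leq |\alpha|$; the identities $x_j = \beta_{|\alpha|+j}$ for $1 \leq j \leq n$; and the periodicity $x_{j+n} = x_j$ for all $j \geq 1$. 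Defining the finite word $c := \beta_{|\alpha|+1} \cdots \beta_{|\beta|}$, this forces $x = c^\infty$ for every $x \in r(\alpha)E^\infty$; in particular $r(\alpha)E^\infty = \{c^\infty\}$. A short check of sources and ranges confirms that $c$ is a cycle based at $r(\alpha)$, and taking $\lambda := \alpha$, $k := 0$, $q := 1$ gives $\alpha = \lambda c^{k}$ and $\beta = \lambda c^{q}$.

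The main obstacle is then establishing that $c$ has no exits in the strict graph-theoretic sense. My strategy would be to exploit the uniqueness $r(\alpha)E^\infty = \{c^\infty\}$: if some vertex $v \in c^0$ emitted an edge $e \notin c^1$, I would traverse $c$ from $r(\alpha)$ to $v$, follow $e$, and then extend from $r(e)$ to an infinite path, producing an element of $r(\alpha)E^\infty$ different from $c^\infty$ and hence a contradiction. The delicate point is that this argument uses the existence of an infinite continuation from $r(e)$; I anticipate needing either to rule out pathological exits leading to dead-ends a priori, or to enlarge the coordinate comparison to test $\alpha x = \beta x$ against elements of the full unit space $X$ (paths ending at sinks or infinite emitters) in order to exclude any such exit. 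This finer analysis near $c^0$ is where I expect the bulk of the technical work to lie.
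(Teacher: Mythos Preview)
Your approach is direct where the paper's is inductive: the paper argues by induction on $|\alpha|+|\beta|$, writing (after arranging $|\alpha|>|\beta|$) $\alpha=\beta\mu$, deducing $\mu x=x$ for all $x\in r(\alpha)E^\infty$, and then invoking the induction hypothesis on the pair $(\mu,r(\mu))$ to conclude $\mu=c^{k}$ for a cycle $c$ without exits. Your coordinate comparison extracts the closed path $c$ in one stroke and then attempts to verify ``no exits'' directly.

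The obstacle you isolate is not a technicality but a genuine gap, and it is present in the paper's argument as well. Consider a vertex $u$ carrying a loop $e$ together with an edge $f$ from $u$ to a sink $w$, and take $\alpha=e$, $\beta=u$. Then $r(\alpha)E^\infty=\{e^\infty\}$ and $\alpha e^\infty=e^\infty=\beta e^\infty$, so (i) holds; yet the only cycle through $u$ is $e$, which has the exit $f$, so (ii) fails. The paper's induction does not detect this: its reduction passes from $|\alpha|+|\beta|$ to $|\mu|=|\alpha|-|\beta|$, which is strictly smaller only when $|\beta|>0$, so when $\beta$ is a vertex (as here) the induction does not advance. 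Your anticipated fix is the right one: replace ``for all $x\in r(\alpha)E^\infty$'' by ``for all $x$ in the boundary space $r(\alpha)X$'' (equivalently, require $s(\eta)=r(\eta)$ for every $\eta\in Z(\alpha,\beta)$). With that hypothesis your exit argument goes through, since from any vertex there is always \emph{some} element of $X$ (an infinite path, or a finite path ending at a sink or infinite emitter), so an exit from $c^0$ produces an $x\in r(\alpha)X$ with $\alpha x\ne\beta x$. This stronger hypothesis is exactly what is available in the sole application of the lemma, the proof of Lemma~\ref{lem.segundo}, where one has $s(\eta)=r(\eta)$ for all $\eta\in Z(\alpha,\beta)$.
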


\begin{proof}
 (\ref{qwe2}) $\Rightarrow$ (\ref{qwe1}) is easy.
To prove (\ref{qwe1}) $\Rightarrow$ (\ref{qwe2})
we use induction on the number $n=\vert\a\vert+\vert\b\vert$. If $n=0$ then $\a$ and $\b$ are vertices and $\a=r(\a)=r(\b)=\b$.
Assume now that (\ref{qwe1}) $\Rightarrow$ (\ref{qwe2}) is true whenever $n<k$ and consider $\a$ and $\b$ such that $\vert\a\vert+\vert\b\vert=k$.
If $\vert\a\vert=\vert\b\vert$, take $x\in r(\a)E^\infty$ (which, by hypothesis, is nonempty) and then $\a x=\b x$, which implies $\a=\b$.

Now, assume $\vert \alpha \vert \neq \vert \beta \vert$. Without loss of generality, 
suppose $\vert \alpha \vert > \vert \beta \vert$. Since for all $x \in r(\alpha)E^\infty$ we have $\alpha x = \beta x$, we may write 
$\alpha = \beta \mu$ for some $\mu \in {\rm Path}(E)$. Then $\alpha x = \beta \mu x= \beta x$ for all $x \in r(\alpha)E^\infty$. 
Therefore $\mu x =x$ and we can apply the induction hypothesis. Thus, either $\mu= r(\mu)$, what implies $\a=\b$, or there is a  finite path $\lambda$ and
a cycle without exits $c$,
such that $\mu=\lambda c^k$ and $r(\mu)=\lambda c^q$. Of course, this implies $q=0$ and $\lambda=r(\mu)$. So $\mu=c^k$ and $\alpha=\beta c^k$.
Since $\beta=\beta c^0$ we are done. It may happen that we can \lq\lq extract\rq\rq\ more cycles $c$ from the path $\b$ so that
$\beta=\lambda c^n$ for some $n\ge 0$. If this is the case we may write $\alpha=\lambda c^{n+k}$, $\beta=\lambda c^n$.
\end{proof}

For any cycle without exists $c$ in a graph $E$, define 

\begin{equation}\label{delta}
\Delta : = \{ (\a c^\infty, m \vert c \vert, \a c^\infty) \ \vert \ \a \in F_E({{c^0}}),\  m \in \N\}.
\end{equation}

\begin{lemma}\label{lem.segundo}
Let $E$ be an arbitrary graph, $R$ a unital commutative ring, and  {$f \in \zz(A_R(G_E))$}. Then $\supp f\subseteq G_E^{(0)}\cup \Delta$.
\end{lemma}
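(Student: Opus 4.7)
The plan is to invoke the characterisation from \cite[Proposition~4.13]{Steinberg} that the centre $\zz(A_R(G_E))$ coincides with the set of class functions. The first class function condition immediately forces $\supp f$ into the isotropy $\{(x,k,x)\in G_E\}$; since the elements with $k=0$ already belong to $G_E^{(0)}$, it remains to show that every $(x,k,x)\in\supp f$ with $k\neq 0$ lies in $\Delta$. As the argument for $k<0$ is symmetric, I treat the case $k>0$.

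Since $f$ is locally constant with $f(x,k,x)\neq 0$, there is a basic compact open bisection $Z((\alpha,\beta)\setminus F)$ containing $(x,k,x)$ on which $f$ takes the constant value $f(x,k,x)$, hence is entirely contained in $\supp f$. Membership of $(x,k,x)$ in $Z((\alpha,\beta)\setminus F)$ yields $x=\alpha y=\beta y$ for some $y\in X$ with $r(\alpha)=s(y)$ and $y$ not starting with any element of $F$, together with $k=|\alpha|-|\beta|>0$. Writing $\alpha=\beta\mu$ with $|\mu|=k$, the equation $\beta y=\beta\mu y$ forces $\mu y=y$; since $|\mu|\geq 1$ this precludes $y$ being finite and yields $y=\mu^\infty$, so $\mu$ is a closed path based at $r(\alpha)$.

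The main obstacle is to show that $\mu$ has no exits, for only then can the primitive cycle underlying $\mu$ play the role of $c$ in the definition of $\Delta$. Suppose for contradiction that $\mu$ admits an exit: an edge $e$ with $s(e)=s(\mu_{n+1})$ for some $0\leq n<|\mu|$ but $e\neq\mu_{n+1}$. Choose $N$ strictly larger than $|\gamma|$ for every $\gamma\in F$, pick any $z\in X$ with $s(z)=r(e)$, and set $y':=\mu^N\mu_1\cdots\mu_n e\,z$. The initial segment of length $N|\mu|+n$ of $y'$ is a prefix of $\mu^\infty$, while position $N|\mu|+n+1$ of $y'$ carries $e$ rather than $\mu_{n+1}$, so $y'\neq\mu^\infty$. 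Moreover, no $\gamma\in F$ can be a prefix of $y'$: any such $\gamma$ would have length at most $N|\mu|$ and would therefore be a prefix of $\mu^\infty$, contradicting $y=\mu^\infty\notin\bigcup_{\gamma\in F}Z(\gamma)$. Hence $(\alpha y',k,\beta y')\in Z((\alpha,\beta)\setminus F)\subseteq \supp f$, and the first class function condition forces $\alpha y'=\beta y'$; this gives $\mu y'=y'$, so $y'=\mu^\infty$, a contradiction.

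Finally, with $\mu$ a closed path without exits I would factor $\mu=c^m$ for the primitive cycle $c$ underlying $\mu$; then $c$ is a cycle without exits, $k=m|c|$, and $y=c^\infty$. Writing $\beta=\beta' c^\ell$, where $\beta'$ is either a vertex of $c^0$ or the shortest prefix of $\beta$ whose range first reaches $c^0$, gives $x=\beta c^\infty=\beta' c^\infty$ with $\beta'$ of the form required by the definition of $\Delta$, so $(x,k,x)=(\beta' c^\infty,\, m|c|,\, \beta' c^\infty)\in\Delta$, completing the proof.
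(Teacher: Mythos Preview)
Your argument is correct and reaches the same conclusion as the paper, but by a genuinely more direct route. The paper's proof first passes to a neighbourhood of the form $Z(\alpha,\beta)$ (with no excluded set) on which $f$ is constant and nonzero, notes that every element there satisfies $s(\eta)=r(\eta)$, and then invokes Lemma~\ref{lem.primero} to deduce that either $\alpha=\beta$ or $\alpha=\lambda c^k$, $\beta=\lambda c^q$ for some cycle $c$ without exits; the point $\gamma$ is then read off as $(\lambda c^\infty,(k-q)|c|,\lambda c^\infty)$. You instead keep the honest basic neighbourhood $Z((\alpha,\beta)\setminus F)$, dispose of the $k=0$ case immediately, extract the closed path $\mu$ with $\alpha=\beta\mu$ and $y=\mu^\infty$, and then prove by hand that $\mu$ has no exits by manufacturing a second tail $y'\neq\mu^\infty$ that still avoids $F$. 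What you gain is a self-contained argument that does not appeal to Lemma~\ref{lem.primero} and that handles the excluded set $F$ explicitly; what the paper gains is brevity, since the combinatorial work is packaged into the preceding lemma (though the paper tacitly uses that for $k\neq 0$ one may absorb $F$ by extending $\alpha,\beta$ along the infinite $x$, which you avoid having to justify). One small notational point: in your last paragraph, the factorisation $\beta=\beta' c^\ell$ and the expression $\beta' c^\infty$ are literally correct only when $\beta$ first enters $c^0$ at the basepoint $s(c)$; in general one should write $\beta' d^\infty$ for the appropriate rotation $d\in[c]$, which is still a cycle without exits and hence still lands in $\Delta$. This matches the paper's own informal treatment of $\Delta$.
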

\begin{proof}
Fix $f\in Z(A_R(G_E))$. Take $\gamma\in \supp f$. Then, $f(\gamma) \neq 0$ and  
$s(\gamma) = r(\gamma)$ since $f$ is a class function. Because $f$ is locally constant, there exist $\alpha, \beta \in {\rm Path}(E)$ such that $r(\alpha)=r(\beta)$ and $\gamma \in Z(\alpha, \beta)$ and $f(\eta)=f(\gamma)\neq 0$ for all $\eta \in  Z(\alpha, \beta)$. Again, using that $f$ is a class function, $s(\eta)=r(\eta)$ for all $\eta \in  Z(\alpha, \beta)$.
Now we have two possibilities:
\begin{enumerate}
\item If $\alpha$ does not connect to an infinite path, then $\alpha = \beta$.  
Thus, $\gamma \in Z(\alpha, \beta)\subseteq G_E^{(0)}$.
\item If $\alpha$ connects to an infinite path, then  by Lemma~\ref{lem.primero}, either  $\a=\b$, 
or $\a=\lambda c^k$ and $\beta=\lambda c^q$ for some $\lambda\in {\rm Path}(E)$ and some cycle without exits $c$.  In this second case,
$Z(\a,\b)=\{(\lambda c^\infty, (k-q)\vert c\vert,\lambda c^\infty)\}$ and so
$\gamma=(\lambda c^\infty, (k-q)\vert c\vert,\lambda c^\infty)$.
\end{enumerate}
\end{proof}

{Finally, we give one last technical lemma and then we will be ready for our main theorem.}

\begin{lemma}\label{CINCO}
Let $E$ be an arbitrary graph and $R$ a commutative  unital ring. 
If  $f$ is a nonzero element in $A_R(G_E)$, then there exists 
$\{b_i\}_{i=1}^l\subseteq R\setminus \{0\}$ such that $f=\sum_{i=1}^lb_if_i$ where $f_i=1_{f^{-1}(b_i)}\in A_R(G_E)$ {for each $i$}.
Moreover, if $f$ is a class function, then each $f_i$ is also a class function.
\end{lemma}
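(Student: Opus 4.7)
The plan is to argue that any $f \in A_R(G_E)$ takes only finitely many values, then split $f$ into its level sets, and finally check that the class-function property descends to each piece.

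First, I would show that $f(G_E)$ is finite. Since $\supp f$ is compact and $f$ is locally constant (hence continuous with $R$ carrying the discrete topology), the image $f(\supp f)$ is a compact subset of a discrete space, so it is finite. Thus the nonzero values of $f$ form a finite set which I can enumerate as $\{b_1,\dots,b_l\}\subseteq R\setminus\{0\}$. By Lemma~\ref{lm:coisupport}(\ref{it1:coisupport}), each $f^{-1}(b_i)$ is compact open, so $f_i:=1_{f^{-1}(b_i)}$ lies in $A_R(G_E)$. The identity $f=\sum_{i=1}^{l} b_i f_i$ is then verified pointwise: if $f(\gamma)=0$ both sides vanish, while if $f(\gamma)=b_j$ then $f_i(\gamma)=\delta_{ij}$, so the right-hand side equals $b_j$ as well. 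This proves the first assertion.

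For the moreover part, suppose $f$ is a class function and fix an index $i$. To check condition \eqref{it1:class} for $f_i$, note that $f_i(x)\ne 0$ forces $f_i(x)=1$, i.e.\ $f(x)=b_i\ne 0$, whence $s(x)=r(x)$ because $f$ itself satisfies \eqref{it1:class}. For condition \eqref{it2:class}, suppose $s(x)=r(x)=s(z)$. Since $f$ is a class function, $f(zxz^{-1})=f(x)$. Hence $x\in f^{-1}(b_i)$ if and only if $zxz^{-1}\in f^{-1}(b_i)$, which gives $f_i(zxz^{-1})=f_i(x)$. Both class-function conditions are verified, so each $f_i$ is a class function.

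There is no real obstacle here; the only subtle point is observing that continuity of $f$ into discrete $R$ combined with compactness of $\supp f$ automatically forces the image to be finite, which is what lets us write $f$ as a finite $R$-linear combination of characteristic functions of its level sets. The rest is a direct pointwise verification.
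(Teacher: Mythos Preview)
Your proof is correct and follows essentially the same approach as the paper's: decompose $f$ according to its finitely many nonzero level sets and verify the class-function conditions for each $f_i$ via the corresponding conditions for $f$. Your argument is in fact slightly more explicit---you justify the finiteness of the value set via compactness of the image in discrete $R$ and invoke Lemma~\ref{lm:coisupport}\eqref{it1:coisupport} for membership of $f_i$ in $A_R(G_E)$---whereas the paper asserts these points more tersely, but there is no substantive difference in strategy.
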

\begin{proof}
Fix $f\in A_R(G_E)$. Since $f$ is nonzero, locally constant and ${\rm supp} f$ is 
compact and open, there exists $\{b_i\}_{i=1}^m\subseteq R\setminus \{0\}$ such that ${\rm supp} f = \cup_{i=1}^l f^{-1}(b_i)$. 
Therefore $f=\sum_{i=1}^lb_if_i$. 

Now, assume that $f$ is a class function. {To see that each $f_i$ is a class function, notice that
Condition (1) follows from the fact ${\rm supp } f_i \subseteq {\rm supp } f$. 
To show Condition (2)}, consider elements $x, z$ such that $s(x)=r(x)=s(z)$. 
Notice that $x\in f^{-1}(b_i)$ if and only if $zxz^{-1}\in f^{-1}(b_i)$ as follows: $b_i=b_if_i(x) =f(x)= f(zxz^{-1})$. Therefore
$f_i(zxz^{-1}) = f_i(x)$.
\end{proof}
\medskip

\begin{theorem}\label{thm:center}
Let $E$ be an arbitrary graph and $R$ a  commutative unital ring. Then: 
\begin{enumerate}[\rm (i)]
\item The zero component of the center of $L_R(E)$ is 
\begin{equation}\label{center1}
\footnotesize{\zz(L_R(E))_0=
 {\rm span}\Big{\{}\sum_{v\in H}v + \sum_{\a\in F_E'(H)}\a\a^\ast + 
{\underset{
\tiny{\begin{matrix}
\a\in {\rm Path}(E)\\
r(\a) \in B_H
\end{matrix}}}{\sum}} ( \a\a^\ast - \sum_{e\in F_\a}\a e e^\ast \a^\ast) \ \vert \
(H, B_H)\in\tec
 \Big{\}}.}
 \end{equation}
 \item When the $n$-component of the center of $L_R(E)$ is nonzero,  for some $n\neq 0$, it coincides with
 \begin{equation}\label{center2}
  \footnotesize{
\zz(L_R(E))_n = \bigoplus_{\tiny{
 \begin{matrix}
 [c]\ \in\ [{\mathcal C}_{ne}^f] \\ m \vert c \vert = n
 \end{matrix}}
 } {\rm span}\Big{\{} 
 \sum_{
\tiny
{\begin{matrix}
 d \in [c]\\
 \a \in F_E(c^0) \cup \{s(d)\}\\
 r(\a)=s(d)
 \end{matrix}
 }
 }\a d^m\a^\ast
\Big{\}}.}
\end{equation}
\end{enumerate}
\end{theorem}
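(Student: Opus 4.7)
The plan is to work throughout in $A_R(G_E)$ via the $\mathbb{Z}$-graded isomorphism $\pi$ from \eqref{def:pi}, and to use \cite[Proposition~4.13]{Steinberg} to identify $\zz(A_R(G_E))$ with the $R$-submodule of class functions. Since $\pi$ is graded, it carries $\zz(L_R(E))_n$ onto the class functions living in $A_R(G_E)_n$, so it suffices to describe the latter and then transport back through $\pi^{-1}$.

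For part (i), take $f\in \zz(A_R(G_E))_0$. By Lemma \ref{lm:coisupport}, $\supp f$ lies in $\geo$ and is compact open invariant, and each nonzero level set $f^{-1}(b)$ has the same properties. Applying Lemma \ref{CINCO} decomposes $f=\sum b_i 1_{U_i}$, where $U_i = f^{-1}(b_i)$ is compact open invariant in $\geo$, and each $1_{U_i}$ is itself a class function by Lemma \ref{UNO}. Hence $\zz(A_R(G_E))_0$ is the $R$-span of $\{1_U : U\subseteq \geo \text{ compact open invariant}\}$. Combining the bijection $(H,S)\mapsto U_{H,S}$ from $\te$ onto $\oe$ with Proposition \ref{prop:compacidad} and Remark \ref{rmk:closed} (compactness forces $S=B_H$), these $U$ are precisely the $U_{H,B_H}$ with $(H,B_H)\in\tec$. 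Finally, Lemma \ref{lm:disjoint} writes each $1_{U_{H,B_H}}$ as a finite disjoint sum of indicator functions $1_{Z(v)}$, $1_{Z(\alpha)}$ and $1_{Z(\alpha\setminus F_\alpha)}$, whose preimages under $\pi$ are, by \eqref{def:pi}, the terms $v$, $\alpha\alpha^\ast$ and $\alpha\alpha^\ast-\sum_{e\in F_\alpha}\alpha e e^\ast\alpha^\ast$ appearing in \eqref{center1}.

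For part (ii), fix $n\neq 0$ and $f\in \zz(A_R(G_E))_n$. By Lemma \ref{lem.segundo}, $\supp f\subseteq \geo\cup\Delta$, and the grading forces $\supp f\subseteq\Delta$; moreover every element of the support has the form $(\alpha d^\infty, m|d|, \alpha d^\infty)$ with $d$ a cycle without exits and $m|d|=n$. Since $d$ has no exits, $Z(\alpha d^m,\alpha)$ is a singleton (the only $x\in X$ with $s(x)=r(d)$ is $d^\infty$), so these points are isolated and compactness of $\supp f$ means it is finite. The class-function condition is then an orbit computation: conjugating $(c^\infty, m|c|, c^\infty)$ by a general element $(y,k,c^\infty)\in G_E$ produces $(y, m|c|, y)$ and, using the no-exit property to force any path reaching $c^0$ to continue along $c$, $y$ ranges exactly over the elements $\alpha d^\infty$ with $d\in[c]$, $r(\alpha)=s(d)$ and $\alpha\in F_E(c^0)\cup\{s(d)\}$. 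Thus $f$ is constant on each such orbit $O_{[c]}$, and by Lemma \ref{CINCO} it decomposes as $\sum b_{[c]} 1_{O_{[c]}}$. Compactness of $\supp f$ forces each orbit that actually appears to be finite, which amounts to $|F_E(c^0)|<\infty$, i.e.\ $[c]\in[\mathcal{C}_{ne}^f]$. Orbits for distinct classes $[c]$ are disjoint (the eventual cycle tail determines $[c]$ uniquely), so the resulting combination is direct. Applying $\pi^{-1}$ to $1_{O_{[c]}}$ via \eqref{def:pi} converts each singleton $1_{Z(\alpha d^m,\alpha)}$ into the monomial $\alpha d^m \alpha^\ast$, producing the expression in \eqref{center2}.

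The main obstacle I expect is the orbit identification in (ii): verifying that conjugation of $(c^\infty, m|c|, c^\infty)$ by every admissible groupoid element yields exactly the indexed family of triples, and no others. This requires both the structural observation that the no-exit hypothesis forces any path meeting $c^0$ to continue indefinitely along $c$, and a careful check that every $d\in[c]$ (not only $c$ itself) is genuinely swept out by this conjugation. A secondary but important check is that the singleton formula $Z(\alpha d^m,\alpha)=\{(\alpha d^\infty, m|d|, \alpha d^\infty)\}$ really holds, since this is what lets us conclude $\supp f$ is finite whenever it is compact and what allows the orbit indicator to be a finite sum of basis-element indicators.
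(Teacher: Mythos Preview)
Your proposal is correct and follows essentially the same route as the paper: both arguments transport the problem to $A_R(G_E)$ via $\pi$, use Lemma~\ref{CINCO} to decompose a class function into indicators of its level sets, invoke Lemmas~\ref{lm:coisupport}, \ref{UNO}, \ref{lm:disjoint} together with Proposition~\ref{prop:compacidad} and Remark~\ref{rmk:closed} for part~(i), and use Lemma~\ref{lem.segundo} plus the singleton identity $Z(\alpha d^m,\alpha)=\{(\alpha d^\infty,m|d|,\alpha d^\infty)\}$ and the conjugation computation for part~(ii). The only cosmetic difference is that you phrase the conjugation step in terms of ``orbits'' $O_{[c]}$, whereas the paper works directly with the level sets $B_i=f^{-1}(b_i)$ and shows $\bigcup_{d\in[c]}D_d=F_E(c^0)\cup\{s(d):d\in[c]\}$; your invocation of Lemma~\ref{CINCO} to obtain $\sum b_{[c]}1_{O_{[c]}}$ is slightly loose (CINCO gives level sets, which may be unions of several orbits), but since distinct orbits are disjoint this does not affect the conclusion.
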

\begin{proof}
(i). First we show that the {right-hand side of}  
\eqref{center1} is contained in $\zz(L_R(E))_0$. Note that this set is in the zero component of $L_R(E)$. Fix $(H, B_H)\in \tec$ and take 
$$a=\sum_{v\in H}v + \sum_{\a\in F_E'(H)}\a\a^\ast + 
{\underset{
\tiny{\begin{matrix}
\a\in {\rm Path}(E)\\
r(\a) \in B_H
\end{matrix}}}{\sum}} ( \a\a^\ast - \sum_{e\in F_\a}\a e e^\ast \a^\ast).$$
By the definition of {the isomorphism $\pi:L_R(E) \to A_R(G_E)$ given in \eqref{def:pi}} , 
we have $ \pi(a)=1_{U_{H, B_H}}$
{by Lemma~\ref{lm:disjoint}. 
Recall that $(H,B_H) \in \tec$ implies ${U_{H, B_H}}$  is a compact open invariant subset of $\geo$. Now, apply 
Lemma \ref{UNO} to get $ \pi(a)=1_{U_{H, B_H}}\in \zz(A_R(G_E))_0$.}

Next we show that $\zz(L_R(E))_0$ is contained in {the right-hand side of \eqref{center1}. 
Fix $f\in \pi (\zz(L_R(E)))_0$ such that $f = \pi(a)$.  We show $a$
is in the right-hand side of \eqref{center1}. First note that $f$ is a class function by \cite[Proposition 4.13]{Steinberg}. 
Thus, by Lemma \ref{CINCO} 
there exists $\{b_i\}_{i=1}^l\subseteq R\setminus \{0\}$ such that 
$f=\sum_{i=1}^l b_i f_i$, where $f_i=1_{f^{-1}(b_i)}$ and each $f_i$ is a class function. 
Then $f^{-1}(b_i)$ is a compact open invariant subset of $\geo$ by 
Lemma~\ref{lm:coisupport}\eqref{it2:coisupport} and \eqref{it3:coisupport}.
Thus we can write $$f= \sum_{i=1}^l b_i 1_{U_{H_i, B_{H_i}}}$$ where each $(H_i, B_{H_i})\in \te$
by \cite[Theorem 3.4]{CMMS}. In fact each $(H_i, B_{H_i})\in \tec$.
For each $i$ write 
$$1_{U_{H_i, B_{H_i}}}= \sum_{v\in H_i}1_{Z(v)} + \sum_{\a\in F_E'(H_i)}1_{Z(\a)} + 
{\underset{
\tiny{\begin{matrix}
\a\in {\rm Path}(E)\\
r(\a) \in B_{H_i}
\end{matrix}}}{\sum}}1_{Z(\a\setminus F_\a)},$$
using Lemma \ref{lm:disjoint}.  Note that every sum in the expression above is finite and $(H_i, B_{H_i})$ 
satisfies Condition (F) by Proposition~\ref{prop:compacidad}.
So $1_{U_{H_i, B_{H_i}}}=\pi(a_i)$ where 
$$a_i= \sum_{v\in H_i}v + \sum_{\a\in F_E'(H_i)}\a\a^\ast + 
{\underset{
\tiny{\begin{matrix}
\a\in {\rm Path}(E)\\
r(\a) \in B_{H_i}
\end{matrix}}}{\sum}} ( \a\a^\ast - \sum_{e\in F_\a}\a e e^\ast \a^\ast).$$  
Thus each $a_i$ is an element of the right-hand side of \eqref{center1}.  Notice that 
$a = \sum_{i=1}^l b_ia_i$ and hence  $a$ is an element of the right-hand side of 
\eqref{center1} as well.
Therefore $\zz(L_R(E))_0$ is contained in the right-hand side and we have shown \eqref{center1}.}

(ii). We start the proof of this item by showing that the set in the {right-hand side} of 
\eqref{center2} is contained in $\zz(L_R(E))_n$.  Suppose $c$ is a cycle without exits such that $\vert F_E({c^0})\vert<\infty$.  Consider 
$$a:= \sum\limits_{\tiny
{\begin{matrix}
 d \in [c]\\
 \a \in F_E(c^0) \cup \{s(d)\}\\
 r(\a)=s(d)
 \end{matrix}
 }}\a d^m\a^\ast, \quad \hbox{where}\quad m\vert c \vert = n.$$
 {Then $a \in L_R(E)_n$.  Also} $$
f:=\pi(a)=\sum\limits_{\tiny
{\begin{matrix}
 d \in [c]\\
 \a \in F_E(c^0) \cup \{s(d)\}\\
 r(\a)=s(d)
 \end{matrix}
 }} 1_{Z(\alpha d^m, \a)}.$$
Notice that for each $d\in [c]$ and every $\a \in F_E({c^0}\cup \{s(d)\})$ such that 
$r(\a) =s(d)$ we have $Z(\a d^m, \a) = \{(\a d^\infty, n, \a d^\infty)\}$ because $c$ has no exits.

{To show that $a \in \zz(L_R(E))_n$, it suffices to show that $f$ is a class function
by \cite[Proposition 4.13]{Steinberg}.} Condition (1) 
is clear. For Condition (2), fix $x, z \in G_E$ such that $r(x)=s(x)=s(z)$.
We claim that  $x\in {\rm supp}\ f$ if and only if $zxz^{-1}\in {\rm supp}\ f$. 
To see this, notice that $f(x) \neq 0$ if and only if $x=(\a d^\infty, n, \a d^\infty)$ for some $d\in [c]$ and some $\a\in F_E(c^0)\cup \{s(d)\}$ with $r(\a)=s(d)$. This happens if and only if $zxz^{-1}= (\b d^\infty, n, \b d^\infty)\in {\rm supp}\ f$ for some $\b\in F_E(c^0)\cup \{s(d)\}$ with $r(\b)=s(d)$.

Thus $f$ is a class function.  {Now since the generators of the right-hand side of \eqref{center2} are in
the $R$-submodule $\zz(L_R(E))_n$, we have that the right-hand side of \eqref{center2} is contained in $\zz(L_R(E))_n$ 
also.}  
\medskip

In what follows we prove that any element in the $n$-component of $\zz(L_R(E))$ is as in the right hand side of \eqref{center2}. Let $a$ be a nonzero element in $\zz(L_R(E))_n$. Let $f=\pi(a)\in \zz(A_R(G_E))_n$. By \cite[Proposition 4.13]{Steinberg} the element $f$
 is a class function. By Lemma \ref{CINCO} there exists $\{b_i\}_{i=1}^l\subseteq R\setminus \{0\}$ such that $f=\sum_{i=1}^l b_i f_i$, where $f_i=1_{f^{-1}(b_i)}$ and each $f_i$ is a class function. 

To see that $a$ is as in the right hand side of \eqref{center2} it suffices to show that
for every $i$, $f_i$ is the image under $\pi$ of an element  in the right hand side of \eqref{center2}.  
Fix $i\in \{1, \dots, l\}$.  {Write $B_i:=f^{-1}(b_i)$.} We claim that there exists a finite 
subset $C\subseteq [{\mathcal C}_{ne}^f]$ such that for each $[c]\in C$ there exists $m_c\in \N$ such that $m_c\vert c \vert=n$ and 
\begin{equation}\label{extralarge}
B_i= {{\bigcup_{[c]\in C}\Big{(} \bigcup_{
\tiny
{\begin{matrix}
 d \in [c]\\
 \a \in F_E(c^0) \cup \{s(d)\}\\
 r(\a)=s(d)
 \end{matrix}
 }
}Z(\a d^{m_c}, \a)\Big{)}}}.
\end{equation}

Indeed, using Lemma \ref{lem.segundo}
we see that $B_i\subseteq \Delta$. 
Thus the elements of $B_i$ are of the form $(\a d^\infty, n, \a d^\infty)$ for some 
cycle without exists $c$, some $d\in [c]$ and some $\a\in F_E(c^0)\cup \{s(d)\}$ with $r(\a)=s(d)$. Thus  there exists $m_c\in \N$ such that 
$$\{(\a d^\infty, n, \a d^\infty)\}= Z(\a d^{m_c}, \a).$$
 Denote by $C$ the set of the classes of cycles appearing in the expression of the elements of $B_i$. For every $[c]\in  C$ and $d\in [c]$,  let $D_d\subseteq F_E({c^0})\cup \{s(d)\}$ be 
the set containing all the $\a $'s  such that $\a d^\infty$ appears in the expression of some element of $B_i$.

Then 
$$B_i = \bigcup_{[c]\in C}\Big{(}\bigcup_{\a\in D_d} Z(\a d^{m_c}, \a) \Big{)}.$$
Since $B_i$ is compact then $C$ and each $D_d$ is finite. 

Fix $[c]\in C$. To prove \eqref{extralarge} we show that

\[\bigcup_{d\in [c]} D_d= F_E({c^0})\cup \{s(d) \ \vert \ d \in [c]\}.\]
 Take  $\b\in F_E({c^0})\cup \{s(d) \ \vert \ d \in [c]\}$. 
 Pick $\a\in D_{d_1}$ for some $d_1\in [c]$ such that $D_{d_1}$ appears in the expression of $B_i$ and let
  $x=(\a d_1^\infty, n, \a d_1^\infty)\in {\rm supp} (f_i)$.  Then, there 
  exists $\gamma \in {\rm Path}(E)$ such that $\a d_1^\infty= \gamma d^\infty$. Consider $z=(\b d^\infty, k, \gamma d^\infty)$, where $k= \vert \beta \vert - \vert \gamma \vert$. Thus
 $$zxz^{-1}= (\beta d^\infty, n,\beta d^\infty) \in {\rm supp} (f_i)$$
because $f_i$ is a class function by Lemma \ref{CINCO}, so $\b \in D_d$ as needed.
 \end{proof}

\medskip

\begin{corollary}{Let $E$ be an arbitrary graph and $R$ a  commutative unital ring.  
Then \[\zz(L_R(E))_n \neq 0\]} for some $n\neq 0$ if and only there exists a cycle without exits $c$ such that $\overline{c^0}$ satisfies Condition {\rm (F)} and $B_{\overline{c^0}}=\emptyset$. Equivalently, $U_{\overline{c^0}, \emptyset}$ is compact.
\end{corollary}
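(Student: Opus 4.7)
The plan is to deduce the corollary directly from Theorem~\ref{thm:center}(ii), Lemma~\ref{lem:cicloscentrales} and Proposition~\ref{prop:compacidad}; no independent ingredient is needed.

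First I would extract from Theorem~\ref{thm:center}(ii) the preliminary equivalence: $\zz(L_R(E))_n \neq 0$ for some $n \neq 0$ if and only if the index set $[\mathcal{C}_{ne}^f]$ is nonempty. The reverse direction uses the containment $\supseteq$ already established in Theorem~\ref{thm:center}(ii): given any $c\in \mathcal{C}_{ne}^f$, taking $m=1$ yields a generating element $\sum_{d,\alpha} \alpha d \alpha^*\in \zz(L_R(E))_{|c|}$; this is nonzero in $L_R(E)$ because, for $\alpha=s(c)$ and $d=c$, the standard Leavitt monomial $c$ appears as a summand and the monomials $\alpha d^m\alpha^*$ are linearly independent in the Leavitt basis. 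The forward direction uses the opposite containment $\subseteq$ from the same theorem.

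Next I would translate ``$[\mathcal{C}_{ne}^f]\ne\emptyset$'' into the stated graph-theoretic condition on the hereditary saturated closure. By definition this says exactly that there is a cycle without exits $c$ satisfying $|F_E(c^0)|<\infty$, and Lemma~\ref{lem:cicloscentrales} applied to $H:=\overline{c^0}$ rewrites that finiteness condition as the conjunction ``$H$ satisfies Condition (F) and $B_H=\emptyset$''. This is precisely the first biconditional in the statement.

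Finally, for the ``equivalently'' clause, I would appeal to Proposition~\ref{prop:compacidad}. Once $B_{\overline{c^0}}=\emptyset$ is in force, the only admissible second coordinate is $S=\emptyset$, so $(\overline{c^0},\emptyset)\in\te$, and the proposition identifies Condition (F) on $\overline{c^0}$ with compactness of $U_{\overline{c^0},\emptyset}$. The only step that is not pure bookkeeping is verifying nonvanishing of the generating central element in Theorem~\ref{thm:center}(ii); this, however, is an immediate consequence of linear independence in the standard Leavitt basis and so poses no real obstacle.
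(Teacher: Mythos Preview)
Your argument is correct and follows exactly the route the paper takes: the paper's proof is the one-line ``It follows by (ii) in Theorem~\ref{thm:center} and Lemma~\ref{lem:cicloscentrales},'' and your write-up simply unpacks this, adding Proposition~\ref{prop:compacidad} for the ``equivalently'' clause. One small remark: the phrase ``linearly independent in the standard Leavitt basis'' is imprecise (the monomials $\alpha\beta^*$ do not form a basis of $L_R(E)$), but nonvanishing of the generator is immediate either from the Steinberg model (the support of $\pi(a)$ is a nonempty set of singletons) or by multiplying on both sides by $s(c)$ to isolate the summand $c$.
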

\begin{proof}
It follows by (ii) in Theorem \ref{thm:center} and Lemma \ref{lem:cicloscentrales}. 
\end{proof}

\medskip

{In the remainder of the paper, we will refine Theorem \ref{thm:center} 
by giving a basis for the center of a Leavitt path algebra. Note that in the left-hand-side of \eqref{center1}, the 
``$\operatorname{span}$'' cannot be changed to a ``$\oplus$'' because the terms may not be linearly independent.}

{Recall that $\oe$ denotes the set of open invariant subsets of $\geo$.}

\begin{lemma}\label{lm:ccdCompactos}
Let $E$ be an arbitrary graph. Then, $\oe$ satisfies the descending chain
condition for compact {open invariant} sets. Moreover, every nonempty compact open invariant subset 
contains a minimal compact {open invariant} subset.
\end{lemma}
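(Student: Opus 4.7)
The plan is to transfer the question to the combinatorial lattice $\tec$ via the bijection $(H,S)\mapsto U_{H,S}$ from \cite[Theorem~3.3]{CMMS}. By Remark~\ref{rmk:closed} together with Proposition~\ref{prop:compacidad}, every compact open invariant subset of $\geo$ has the form $U_{H,B_H}$ for a unique pair $(H,B_H)\in\tec$, and Condition~(F) on $H$ forces in particular that $|H|<\infty$.

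Given a descending chain $U_1\supseteq U_2\supseteq\cdots$ of compact open invariant subsets, I would write $U_i=U_{H_i,B_{H_i}}$ and argue that the corresponding chain of hereditary and saturated sets satisfies $H_1\supseteq H_2\supseteq\cdots$. This follows from the fact that the bijection of \cite[Theorem~3.3]{CMMS} is order-preserving, with the standard partial order $(H',S')\leq(H,S)$ iff $H'\subseteq H$ and $S'\subseteq H\cup S$ on $\te$. Since $H_1$ is finite, the descending chain of its subsets stabilises at some $H_N$; because $B_H$ is determined by $H$, this gives $(H_i,B_{H_i})=(H_N,B_{H_N})$ and hence $U_i=U_N$ for all $i\geq N$, proving DCC. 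For the second assertion, I would fix a nonempty compact open invariant subset $U$ and consider the poset of nonempty compact open invariant subsets of $U$ ordered by inclusion; it contains $U$, and by DCC every strictly descending chain terminates, so iteratively picking strictly smaller elements must halt at a minimal one.

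The hard part will be verifying the order-correspondence step: that $U_{H_{i+1},B_{H_{i+1}}}\subseteq U_{H_i,B_{H_i}}$ forces $H_{i+1}\subseteq H_i$. For a sink in $H_{i+1}$ the implication is immediate, since a sink cannot belong to $B_{H_i}$. The delicate case is an infinite emitter $v\in H_{i+1}$, which a priori could land in $B_{H_i}$ rather than in $H_i$. Ruling this out would combine the hereditary property of $H_{i+1}$ (every edge out of $v$ has range in $H_{i+1}$) with the finiteness of $H_{i+1}$ guaranteed by Condition~(F), together with saturation applied to regular descendants of $v$ whose forward paths all meet $H_i$; alternatively, one can simply appeal to the lattice-isomorphism formulation of \cite[Theorem~3.3]{CMMS} and bypass the casework entirely.
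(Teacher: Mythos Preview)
Your proposal is correct and follows essentially the same approach as the paper: transfer the chain to $\te$ via \cite[Theorem~3.3]{CMMS}, use Proposition~\ref{prop:compacidad} (and Remark~\ref{rmk:closed}) to get $S=B_H$ and $|H|<\infty$, then conclude that the finite descending chain $H_1\supseteq H_2\supseteq\cdots$ stabilises. Your third paragraph is unnecessary caution, since \cite[Theorem~3.3]{CMMS} is stated as a lattice isomorphism and so already carries the order relation; the paper simply invokes it directly and dispatches the ``moreover'' clause as immediate from DCC, just as you do.
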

\begin{proof}
Let $U$ be a compact set in $\oe$. By \cite[Theorem 3.3]{CMMS}, $U=U_{H, S}$ for some $(H, S)\in \te$, 
and by Proposition \ref{prop:compacidad}, $S=B_H$ and $H$ satisfies Condition (F). In particular, $H$ is finite. 

Now, consider a decreasing sequence $U_{H_1,B_{H_1}}\supseteq U_{H_2,B_{H_2}}\supseteq \dots$. This implies, again by \cite[Theorem 3.3]{CMMS}, that  $H_1\supseteq H_2 \supseteq \dots$. Since $H_1$ is finite, then there exists $i\in \N$ such that $U_{H_i,B_{H_i}}= U_{H_{i+n},B_{H_{i+n}}}$ for all $n\in \N$.
This proves that $\oe$ satisfies the descending chain condition for compact sets.

The ``moreover part" follows immediately.

\end{proof}

\begin{lemma}\label{lem:descompMin}
Let $U$ be a compact subset of $\oe$. Then $U=\sqcup_{V\in \mathcal M}V$, where $\mathcal M$ is the finite set of all 
minimal compact open invariant subsets contained in $U$. 
\end{lemma}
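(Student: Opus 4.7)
The plan is to establish three facts in order: the set $\mathcal M$ is finite, its elements are pairwise disjoint, and their union equals $U$.

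For finiteness of $\mathcal M$, I would appeal to Proposition \ref{prop:compacidad} to write $U = U_{H, B_H}$ with $H$ satisfying Condition (F); this forces both $H$ and $B_H$ to be finite (the latter because vertices are paths of length zero, and the set of paths into $B_H$ is finite by Condition (F)). By \cite[Theorem~3.3]{CMMS}, any compact open invariant subset $V \subseteq U$ has the form $V = U_{H', B_{H'}}$ for some hereditary and saturated $H'$. Evaluating the inclusion $U_{H', B_{H'}} \subseteq U_{H, B_H}$ on a vertex $v \in H'$ (viewed as a length-$0$ path in $\geo$) forces $v \in H \cup B_H$; hence $H' \subseteq H \cup B_H$, a finite set. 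There are thus only finitely many choices of $H'$, and in particular $\mathcal M$ is finite.

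For disjointness, fix distinct $V_1, V_2 \in \mathcal M$. The intersection $V_1 \cap V_2$ is open and invariant; since $\geo$ is Hausdorff and $V_1$ is compact, $V_1$ is closed, so $V_1 \cap V_2$ is a closed subset of the compact set $V_2$, hence compact. Minimality of $V_1$ applied to $V_1 \cap V_2 \subseteq V_1$ forces $V_1 \cap V_2 \in \{\emptyset, V_1\}$, and the latter would give $V_1 \subseteq V_2$ and then $V_1 = V_2$ by minimality of $V_2$, contradicting distinctness.

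Finally, set $W := \bigcup_{V \in \mathcal M} V$. As a finite union, $W$ is open, invariant, and compact. If $U \setminus W$ were nonempty, then, using that the compact set $W$ is closed in the Hausdorff space $\geo$, $U \setminus W$ would be open; it is invariant as a difference of invariant sets, and compact as a closed subset of $U$. The moreover part of Lemma \ref{lm:ccdCompactos} would then supply a minimal compact open invariant subset $V' \subseteq U \setminus W$, forcing $V' \in \mathcal M$ and hence $V' \subseteq W$, a contradiction. The main obstacle is the finiteness in the first step: it requires unpacking the bijection of \cite[Theorem~3.3]{CMMS} at the vertex level and combining it with Condition (F) to bound $H'$ inside the finite set $H \cup B_H$.
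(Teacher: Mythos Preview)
Your overall strategy matches the paper's proof: show $\mathcal M$ is finite, show pairwise disjointness via minimality, then argue by contradiction that the complement $U\setminus W$ is empty using Lemma~\ref{lm:ccdCompactos}. Steps two and three are correct and essentially identical to the paper's argument.

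There is, however, a genuine gap in your finiteness argument. You write that for $v\in H'$ you can ``view $v$ as a length-$0$ path in $\geo$'' and then read off $v\in H\cup B_H$ from the inclusion $U_{H',B_{H'}}\subseteq U_{H,B_H}$. But $\geo$ is identified with the boundary-path space $X$, and a vertex $v$ lies in $X$ only when $v$ is a sink or an infinite emitter. If $v\in H'$ is a regular vertex, it is not an element of $\geo$ at all, so the evaluation you describe is not available and the argument breaks down precisely in the case you most need.

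The conclusion you want is nonetheless true, and the fix is the one the paper uses (implicitly here, explicitly in the proof of Lemma~\ref{lm:ccdCompactos}): invoke the lattice isomorphism of \cite[Theorem~3.3]{CMMS} directly. That isomorphism is order-preserving, so $U_{H',B_{H'}}\subseteq U_{H,B_H}$ forces $H'\subseteq H$. Since $H$ is finite by Condition~(F) and compactness forces $S'=B_{H'}$ (Remark~\ref{rmk:closed}), there are only finitely many possible pairs $(H',B_{H'})$, hence finitely many compact open invariant subsets of $U$, and in particular $\mathcal M$ is finite. Replace your vertex-evaluation step with this appeal and the proof goes through.
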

\begin{proof}
Let $U=U_{H, B_H}$ be a compact { open invariant element of} $\oe$. By Lemma \ref{lm:ccdCompactos} the set $\mathcal M$  is nonempty. Moreover, 
it contains a finite number of elements as $H$ is finite. {We claim that  $\cup_{V\in \mathcal M} V$ is a disjoint union.}  
Indeed, for $V, V'\in \mathcal M$, since $V\cap V'$ is compact, the minimality of $V$ (and of $V'$) implies $V\cap V'=\emptyset$ or $V\cap V'=V=V'$.

{Finally, to see that $\cup_{V\in \mathcal M} V$ coincides with $U$, write 
\[U=  \big{(}\cup_{V\in \mathcal M} V\big{)} \sqcup U', \quad \text{ where } \quad U'=U\setminus \big{(}\cup_{V\in \mathcal M} V\big{)}.\] 
Note that $U'$ is compact (as it is a closed subset of a compact set) and
$U' \subseteq \oe$. By way of contradiction, suppose $U'$ is nonempty.  Then by Lemma \ref{lm:ccdCompactos}, $U'$ 
contains a minimal compact subset, say $U''$. Since $U''\in \mathcal M$, we get a contradiction.}
\end{proof}

\begin{remark}\label{rm:prod}
\rm
It is not difficult to see that for $U$ and $V$ compact sets in $\oe$. 
\begin{enumerate}[\rm (i)]
\item If  $U\cap V=\emptyset$ then $1_U1_V=0$.
\item $1_U1_U=1_U$.
\end{enumerate}
\end{remark}

\begin{theorem}\label{thm:base}
Let $E$ be an arbitrary graph and $R$ a  commutative  unital ring. Then: 
\begin{enumerate}[\rm (i)]
\item The zero component of the center of $L_R(E)$ is a free $R$-module. One basis is given by: 
\begin{equation}\label{base1}
 \footnotesize{
\Big{\{}\sum_{v\in H}v + \sum_{\a\in F_E'(H)}\a\a^\ast + 
{\underset{
\tiny{\begin{matrix}
\a\in {\rm Path}(E)\\
r(\a) \in B_H
\end{matrix}}}{\sum}} ( \a\a^\ast - \sum_{e\in F_\a}\a e e^\ast \a^\ast) \ \vert \
(H, B_H)\in\tecm
 \Big{\}}.}
 \end{equation}
 \item When the $n$-component of the center of $L_R(E)$ is nonzero it is a free $R$-module and one basis is given by:
 \begin{equation}\label{base2}
  \footnotesize{
 \bigsqcup_{\tiny{
 \begin{matrix}
 [c]\ \in\ [{\mathcal C}_{ne}^f] \\ m \vert c \vert = n
 \end{matrix}}
 }\Big{\{} 
 \sum_{
\tiny
{\begin{matrix}
 d \in [c]\\
 \a \in F_E(c^0) \cup \{s(d)\}\\
 r(\a)=s(d)
 \end{matrix}
 }
 }\a d^m\a^\ast
\Big{\}}.}
\end{equation}
\end{enumerate}
\end{theorem}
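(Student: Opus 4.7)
The plan is to transport everything through the isomorphism $\pi:L_R(E)\to A_R(G_E)$ and argue on the groupoid side, where central elements in the zero component are characteristic functions of compact open invariant subsets of $\geo$, and central elements in the $n$-component (for $n\neq 0$) are supported on the set $\Delta$ associated with cycles without exits. In both parts, spanning is already delivered by Theorem \ref{thm:center}; what remains is to cut the spanning set down to a basis and verify $R$-linear independence.

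For part (i), let $B$ denote the proposed basis \eqref{base1}, indexed by $\tecm$, and recall that Theorem \ref{thm:center}(i) expresses $\zz(L_R(E))_0$ as the $R$-span of the analogous elements indexed by $\tec$. Given $(H,B_H)\in\tec$, Lemma \ref{lem:descompMin} decomposes $U_{H,B_H}$ as a finite disjoint union of minimal compact open invariant subsets $V_1,\dots,V_k$, so $1_{U_{H,B_H}}=\sum_j 1_{V_j}$ in $A_R(G_E)$. Pulling back by $\pi^{-1}$ shows the generator associated with $(H,B_H)$ lies in the $R$-span of $B$, hence $B$ spans. For linear independence, I would first prove that two distinct minimal compact open invariant subsets $V,V'$ are disjoint: their intersection $V\cap V'$ is open (intersection of opens), invariant (intersection of invariants), and compact (closed in Hausdorff $\geo$, hence closed in $V$), so if nonempty it would contain a minimal compact open invariant subset by Lemma \ref{lm:ccdCompactos}, which by minimality of $V$ and $V'$ would force $V=V'$. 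Consequently the characteristic functions $\{1_{U_{H,B_H}}\}_{(H,B_H)\in\tecm}$ have pairwise disjoint supports, and any $R$-linear relation evaluated at a point of a specific $U_{H_i,B_{H_i}}$ forces the corresponding coefficient to vanish; applying $\pi^{-1}$ gives linear independence of $B$.

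For part (ii), Theorem \ref{thm:center}(ii) already presents $\zz(L_R(E))_n$ as a direct sum over $[c]\in[\mathcal{C}_{ne}^f]$ with $m|c|=n$, with each summand spanned by the single element $a_{[c]}:=\sum \a d^m\a^{\ast}$ of \eqref{base2}. It therefore suffices to check that each $a_{[c]}$ is free over $R$, i.e.\ $r\cdot a_{[c]}=0$ with $r\in R$ forces $r=0$. This is immediate on the Steinberg side: $\pi(a_{[c]})=\sum 1_{Z(\a d^m,\a)}$ evaluates to $1$ at each point $(\a d^\infty,n,\a d^\infty)$ in $\Delta$, so $r\pi(a_{[c]})(\a d^\infty,n,\a d^\infty)=r$, whence $r=0$. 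The directness of the sum used in Theorem \ref{thm:center}(ii) itself rests on the observation that for distinct cycle classes $[c]\neq[c']$ the supports of $\pi(a_{[c]})$ and $\pi(a_{[c']})$ lie in disjoint subsets of $\Delta$ (no infinite path can be tail-equivalent to both $c^\infty$ and $(c')^\infty$ when $c,c'$ are cycles without exits in different classes).

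The main obstacle, conceptually, is the reduction step in (i): producing a canonical basis from the overcomplete spanning set indexed by $\tec$ by passing to minimal elements. The crucial technical ingredient is the pairwise disjointness of minimal compact open invariant subsets, which I would extract by combining Lemma \ref{lem:descompMin} with Lemma \ref{lm:ccdCompactos} as sketched above. Once that disjointness is in hand, both spanning and linear independence on the $A_R(G_E)$-side follow cleanly, and transport via the graded isomorphism $\pi$ finishes the proof.
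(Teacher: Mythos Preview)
Your proposal is correct and follows essentially the same route as the paper: spanning from Theorem \ref{thm:center} plus Lemma \ref{lem:descompMin}, and linear independence from pairwise disjointness of the relevant supports. The only tactical difference is that the paper packages the independence in (i) via the orthogonal-idempotent observation of Remark \ref{rm:prod} (i.e.\ $1_U1_V=0$ for disjoint $U,V$ and $1_U^2=1_U$) rather than pointwise evaluation, and in (ii) it argues by multiplying the elements $\alpha d^m\alpha^\ast$ in $L_R(E)$ rather than evaluating on the Steinberg side; these are equivalent manifestations of the same disjoint-support principle.
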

\begin{proof}

(i) By Theorem \ref{thm:center} and Lemma \ref{lem:descompMin}, the set  given in \eqref{base1} generates ${\mathcal Z}(L_R(E))_0$. 
Use Remark \ref{rm:prod} to obtain that it is a linearly independent set.

 (ii) Again by Theorem \ref{thm:center}  we have that the set given in \eqref{base2} generates ${\mathcal Z}(L_R(E))_n$.
 To see that we have a linearly independent set, consider $\a d^m\a^\ast$ and $\b t^n\b^\ast$ as in \eqref{base2}. Note that 
$x:=(\a d^m\a^\ast)(\b t^n\b^\ast)$ is zero except when $\a=\b$ and $d=t$. In this case,
$x=\a d^{m+n}\a^\ast$. This implies that the elements in \eqref{base2} are linearly independent.
\end{proof}


\end{document}